\newtheorem*{thm*}{Theorem 1}
\newtheorem*{thm**}{Theorem 2}
\newtheorem*{thm***}{Theorem 3}
\newtheorem{thm}{Theorem}[section]
\newtheorem{lem}[thm]{Lemma}
\newtheorem{conj}[thm]{Conjecture}
\newtheorem{prop}[thm]{Proposition}
\newtheorem{question}[thm]{Question}
\theoremstyle{remark}
\newtheorem{rem}[thm]{Remark}
\theoremstyle{definition}
\newtheorem{defn}[thm]{Definition}
\numberwithin{equation}{section}
\begin{document}

\vfuzz0.5pc
\hfuzz0.5pc 

\newcommand{\claimref}[1]{Claim \ref{#1}}
\newcommand{\thmref}[1]{Theorem \ref{#1}}
\newcommand{\propref}[1]{Proposition \ref{#1}}
\newcommand{\lemref}[1]{Lemma \ref{#1}}
\newcommand{\coref}[1]{Corollary \ref{#1}}
\newcommand{\remref}[1]{Remark \ref{#1}}
\newcommand{\conjref}[1]{Conjecture \ref{#1}}
\newcommand{\questionref}[1]{Question \ref{#1}}
\newcommand{\defnref}[1]{Definition \ref{#1}}
\newcommand{\secref}[1]{\S \ref{#1}}
\newcommand{\ssecref}[1]{\ref{#1}}
\newcommand{\sssecref}[1]{\ref{#1}}

\newcommand{\RED}{{\mathrm{red}}}
\newcommand{\tors}{{\mathrm{tors}}}
\newcommand{\eq}{\Leftrightarrow}

\newcommand{\mapright}[1]{\smash{\mathop{\longrightarrow}\limits^{#1}}}
\newcommand{\mapleft}[1]{\smash{\mathop{\longleftarrow}\limits^{#1}}}
\newcommand{\mapdown}[1]{\Big\downarrow\rlap{$\vcenter{\hbox{$\scriptstyle#1$}}$}}
\newcommand{\smapdown}[1]{\downarrow\rlap{$\vcenter{\hbox{$\scriptstyle#1$}}$}}

\newcommand{\A}{{\mathbb A}}
\newcommand{\I}{{\mathcal I}}
\newcommand{\J}{{\mathcal J}}
\newcommand{\CO}{{\mathcal O}}
\newcommand{\C}{{\mathscr{C}}}
\newcommand{\CC}{{\mathcal C}}
\newcommand{\BC}{{\mathbb C}}
\newcommand{\BQ}{{\mathbb Q}}
\newcommand{\m}{{\mathcal M}}
\newcommand{\h}{{\mathcal H}}
\newcommand{\Z}{{\mathcal Z}}
\newcommand{\BZ}{{\mathbb Z}}
\newcommand{\W}{{\mathcal W}}
\newcommand{\Y}{{\mathcal Y}}
\newcommand{\T}{{\mathcal T}}
\newcommand{\BP}{{\mathbb P}}
\newcommand{\CP}{{\mathcal P}}
\newcommand{\G}{{\mathbb G}}
\newcommand{\g}{\mathscr{G}}
\newcommand{\CG}{{\mathcal G}}
\newcommand{\BR}{{\mathbb R}}
\newcommand{\D}{{\mathscr{D}}}
\newcommand{\DD}{{\mathcal D}}
\newcommand{\LL}{{\mathcal L}}
\newcommand{\f}{\mathscr{F}}
\newcommand{\CF}{{\mathcal F}}
\newcommand{\E}{{\mathcal E}}
\newcommand{\e}{{\mathscr{E}}}
\newcommand{\BN}{{\mathbb N}}
\newcommand{\N}{{\mathcal N}}
\newcommand{\n}{{\mathscr{N}}}
\newcommand{\K}{{\mathcal K}}
\newcommand{\R} {{\mathbb R}}
\newcommand{\PP}{{\mathbb P}}
\newcommand{\Pp}{{\mathbb P}}
\newcommand{\BF}{{\mathbb F}}
\newcommand{\QQ}{{\mathcal Q}}
\newcommand{\closure}[1]{\overline{#1}}
\newcommand{\EQ}{\Leftrightarrow}
\newcommand{\imply}{\Rightarrow}
\newcommand{\isom}{\cong}
\newcommand{\embed}{\hookrightarrow}
\newcommand{\tensor}{\mathop{\otimes}}
\newcommand{\wt}[1]{{\widetilde{#1}}}
\newcommand{\ol}{\overline}
\newcommand{\ul}{\underline}

\newcommand{\bs}{{\backslash}}
\newcommand{\CS}{{\mathcal S}}
\newcommand{\CA}{{\mathcal A}}
\newcommand{\Q}{{\mathbb Q}}
\newcommand{\F}{{\mathcal F}}
\newcommand{\sing}{{\text{sing}}}
\newcommand{\U} {{\mathcal U}}
\newcommand{\B}{{\mathcal B}}
\newcommand{\X}{{\mathcal X}}

\newcommand{\ECS}[1]{E_{#1}(X)}
\newcommand{\CV}[2]{{\mathcal C}_{#1,#2}(X)}

\newcommand{\rank}{\mathop{\mathrm{rank}}\nolimits}
\newcommand{\codim}{\mathop{\mathrm{codim}}\nolimits}
\newcommand{\Ord}{\mathop{\mathrm{Ord}}\nolimits}
\newcommand{\Var}{\mathop{\mathrm{Var}}\nolimits}
\newcommand{\Ext}{\mathop{\mathrm{Ext}}\nolimits}
\newcommand{\EXT}{\mathop{\mathscr{E}{\kern -2pt {xt}}}\nolimits}
\newcommand{\Pic}{\mathop{\mathrm{Pic}}\nolimits}
\newcommand{\Spec}{\mathop{\mathrm{Spec}}\nolimits}
\newcommand{\Jac}{\mathop{\mathrm{Jac}}\nolimits}
\newcommand{\Div}{\mathop{\mathrm{Div}}\nolimits}
\newcommand{\sgn}{\mathop{\mathrm{sgn}}\nolimits}
\newcommand{\supp}{\mathop{\mathrm{supp}}\nolimits}
\newcommand{\Hom}{\mathop{\mathrm{Hom}}\nolimits}
\newcommand{\HOM}{\mathop{\mathscr{H}{\kern -3pt {om}}}\nolimits}
\newcommand{\Sym}{\mathop{\mathrm{Sym}}\nolimits}
\newcommand{\nilrad}{\mathop{\mathrm{nilrad}}\nolimits}
\newcommand{\Ann}{\mathop{\mathrm{Ann}}\nolimits}
\newcommand{\Proj}{\mathop{\mathrm{Proj}}\nolimits}
\newcommand{\mult}{\mathop{\mathrm{mult}}\nolimits}
\newcommand{\Bs}{\mathop{\mathrm{Bs}}\nolimits}
\newcommand{\Span}{\mathop{\mathrm{Span}}\nolimits}
\newcommand{\IM}{\mathop{\mathrm{Im}}\nolimits}
\newcommand{\Hol}{\mathop{\mathrm{Hol}}\nolimits}
\newcommand{\End}{\mathop{\mathrm{End}}\nolimits}
\newcommand{\CH}{\mathop{\mathrm{CH}}\nolimits}
\newcommand{\Exec}{\mathop{\mathrm{Exec}}\nolimits}
\newcommand{\SPAN}{\mathop{\mathrm{span}}\nolimits}
\newcommand{\birat}{\mathop{\mathrm{birat}}\nolimits}
\newcommand{\cl}{\mathop{\mathrm{cl}}\nolimits}
\newcommand{\rat}{\mathop{\mathrm{rat}}\nolimits}
\newcommand{\Bir}{\mathop{\mathrm{Bir}}\nolimits}
\newcommand{\Rat}{\mathop{\mathrm{Rat}}\nolimits}
\newcommand{\aut}{\mathop{\mathrm{aut}}\nolimits}
\newcommand{\Aut}{\mathop{\mathrm{Aut}}\nolimits}
\newcommand{\eff}{\mathop{\mathrm{eff}}\nolimits}
\newcommand{\nef}{\mathop{\mathrm{nef}}\nolimits}
\newcommand{\amp}{\mathop{\mathrm{amp}}\nolimits}
\newcommand{\DIV}{\mathop{\mathrm{Div}}\nolimits}
\newcommand{\Bl}{\mathop{\mathrm{Bl}}\nolimits}
\newcommand{\Cox}{\mathop{\mathrm{Cox}}\nolimits}
\newcommand{\NE}{\mathop{\mathrm{NE}}\nolimits}
\newcommand{\NM}{\mathop{\mathrm{NM}}\nolimits}
\newcommand{\Gal}{\mathop{\mathrm{Gal}}\nolimits}
\newcommand{\coker}{\mathop{\mathrm{coker}}\nolimits}
\newcommand{\ch}{\mathop{\mathrm{ch}}\nolimits}
\newcommand{\Gr}{\mathop{\mathrm{Gr}}\nolimits}
\newcommand{\Td}{\mathop{\mathrm{Td}}\nolimits}
\newcommand{\kodim}{\mathop{\mathrm{kodim}}\nolimits}
\newcommand{\Mov}{\mathop{\mathrm{Mov}}}
\newcommand{\Conv}{\mathop{\mathrm{Conv}}}
\newcommand{\Null}{\mathop{\mathrm{Null}}}

\newcommand{\comment}[1]{}

\ifthenelse{\equal{1}{1}}{
	\newcommand{\blue}[1]{{\color{blue}#1}}
	\newcommand{\green}[1]{{\color{green}#1}}
	\newcommand{\red}[1]{{\color{red}#1}}
}{
	\newcommand{\blue}[1]{}
	\newcommand{\green}[1]{}
	\newcommand{\red}[1]{}
}

\usetikzlibrary{decorations.markings}
\tikzset{degil/.style={
            decoration={markings,
            mark= at position 0.5 with {
                  \node[transform shape] (tempnode) {\tiny $/$};
                  }
              },
              postaction={decorate}
}
}

\title[Zariski's Conjecture and Euler-Chow Series]{Zariski's Conjecture and Euler-Chow Series}

\author{Xi Chen}
\address{Department of Mathematics and Statistics\\
	University of Alberta\\
	Edmonton, Alberta T6G 2G1, CANADA}
\email{xichen@math.ualberta.ca}
\thanks{Research of Chen was supported in part by NSERC 262265.}

\author{E. Javier Elizondo}
\address{Instituto de Matem\'aticas\\
Universidad Nacional Aut\'onoma de M\'exico\\Ciudad Universitaria\\
M\'exico DF 04510, M\'exico}
\email{javier@im.unam.mx}
\thanks{Research of Elizondo was supported in part by DGAPA 109515.}

\date{March 1, 2020}

\maketitle

\begin{abstract}
We study the relations between the finite generation of Cox ring, the rationality of Euler-Chow series and Poincar\'e series and Zariski's conjecture on dimensions of linear systems. We prove that if the Cox ring of a smooth projective variety is finitely generated, then all Poincar\'e series of the variety are rational. We also prove that the multi-variable Poincar\'e series associated to big divisors on a smooth projective surface are rational, assuming the rationality of multi-variable Poincar\'e series on curves.  
\end{abstract}

\section{Introduction}

Let $X$ be a smooth projective variety.
For an effective divisor $D$ on $X$, consider the subring
\begin{equation*}\label{E500}
R(X,D) = \bigoplus_{n=0}^\infty H^0(\CO_X(nD))
\end{equation*}
of the Cox ring $\Cox(X)$ of $X$ \cite{CoxRing} with associated formal power series
\begin{equation*}\label{E502}
R_{X,D}(t) = \sum_{n=0}^\infty h^0(\CO_X(nD)) t^n
\end{equation*}
in $\BZ[[t]]$. We call $R_{X,D}$ the {\em Poincar\'e series} associated to $D$.

We may think of $R_{X,D}$ as a sub-series of the {\em Euler-Chow series} of $X$ in codimension one, roughly given by
\begin{equation*}\label{GSRING2E000}
E_X = \sum_{D\in \Pic(X)} h^0(\CO_X(D)) t^D.
\end{equation*}
For the precise definition of $E_X$, we refer the readers to \cite{ElizondoEulerSeries} and \cite[Definition 2.1]{CEY}.
Also we give some known examples of $E_X$ in Appendix \ref{GSRING2APPENDIXA}.

The rationality of $E_X$ and its relation to the finite generation of $\Cox(X)$ were studied in \cite{KKT} and \cite{CEY}. Here we ask some further questions on $\Cox(X)$, $E_X$, $R(X,D)$ and $R_{X,D}$:

\begin{question}\label{Q000}
Let $X$ be a smooth projective variety whose Picard group $\Pic(X)$ is a free abelian group
of finite rank. Are these statements true:
\begin{enumerate}
\item
$\Cox(X)$ is finitely generated if and only if $E_X$ is rational?
\item
$\Cox(X)$ is finitely generated if and only if $R(X,D)$ is finitely generated
for every effective divisor $D$?
\item
$R(X,D)$ is finitely generated if and only if $R_{X,D}$ is rational?
\item
$E_X$ is rational if and only if $R_{X, D}$ is rational for every effective divisor $D$? 
\end{enumerate}
\end{question}
The answers to some of these questions are known: affirmative or negative; some are still wide
open. They are illustrated by the following diagram:
\begin{equation*}\label{E501}
\begin{tikzcd}
\text{$\Cox(X)$ f.g.}
\ar[r, Rightarrow, shift left=1.5]
\ar[r, Leftarrow, degil, shift right=1.5]
\ar[d, Rightarrow, shift left=1.5]
\ar[d, Leftarrow, degil, shift right=1.5]
&
\text{$E_X$ rational}
\ar[d, Rightarrow, shift left=1.5, "?"]
\ar[d, Leftarrow, degil, shift right=1.5]
\\
\text{$R(X,D)$ f.g. $\forall D$}
\ar[r, Rightarrow, shift left=1.5]
\ar[r, Leftarrow, degil, shift right=1.5]
&
\text{$R_{X,D}$ rational $\forall D$}
\end{tikzcd}
\end{equation*}


\begin{rem}\label{REM000000}
Here are a few comments:
\begin{enumerate}
\item
There are surfaces $X$ with $E_X$ rational and $\Cox(X)$ not finitely generated
\cite{CEY}. Therefore, 
\begin{equation*}\label{E503}
E_X \text{ rational } \not\Rightarrow \Cox(X) \text{ f.g.}
\end{equation*}
\item
A subring of a noetherian ring is not necessarily noetherian. Even if $\Cox(X)$ is a finitely
generated $\BC$-algebra, its sub-algebras are not necessarily finitely generated. However,
we still have
\begin{equation}\label{E504}
\Cox(X) \text{ f.g.} \Rightarrow R(X,D) \text{ f.g. for all } D
\end{equation}
by \cite[Proposition 1.11]{H-K} (see also \cite[Proposition 9.6]{OkawaImageMDS}).
On the other hand, taking $X$ to be the blowup of $\PP^2$ at $9$ points in general position,
one can easily show that $R(X,D)$ is finitely generated
for every effective divisor $D$ but $\Cox(X)$ is not finitely generated.
Therefore,
\begin{equation*}\label{E505}
R(X,D) \text{ f.g. for all } D \not\Rightarrow \Cox(X) \text{ f.g.}
\end{equation*}
\item
For a smooth projective surface $X$, $R_{X,D}$ is rational for every effective divisor $D$ by
Zariski's conjecture \cite{ZariskiConjecture} (see below).
On the other hand, Zariski found examples of $D$ such that $R(X,D)$
is not finitely generated \cite{ZariskiConjecture}. Therefore, 
\begin{equation*}\label{E507}
R_{X,D} \text{ rational } \not\Rightarrow R(X,D) \text{ f.g.}
\end{equation*}
\item
Finally, we expect it to be true that
\begin{equation*}\label{E508}
E_X \text{ rational } \Rightarrow R_{X,D} \text{ rational for all } D \text{ effective}; 
\end{equation*}
yet it is by no means obvious to us. On the other hand,
\begin{equation*}\label{E506}
R_{X,D} \text{ rational for all } D \not\Rightarrow E_X \text{ rational}
\end{equation*}
by taking $X$ to be the blowup of
$\PP^2$ at $9$ general points \cite{KKT} and \cite{CEY} (see below).
\end{enumerate}
\end{rem}

Our first result is

\begin{thm}\label{THM001}
For a Mori dream space (MDS) $X$ over $\BC$,
\begin{equation*}\label{GSRING2E044}
R(X,D_1,D_2,...,D_l) = \bigoplus_{m_i\in \BN} H^0(\CO_X(\sum_{i=1}^l m_iD_i))
\end{equation*}
is a finitely generated $\BC$-algebra and
\begin{equation*}\label{GSRING2E045}
M(X,D,D_1,D_2,...,D_l) = \bigoplus_{m_i\in \BN} H^0(\CO_X(D + \sum_{i=1}^l m_iD_i))
\end{equation*}
is a finitely generated module over $R(X,D_1,D_2,...,D_l)$
for all $\BQ$-effective divisors $D_1, D_2, ..., D_l\in \Pic(X)$, all $l\in \BZ^+$
and all $D\in \Pic(X)$.
\end{thm}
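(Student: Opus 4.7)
The plan is to realize both $R(X, D_1, \ldots, D_l)$ and $M(X, D, D_1, \ldots, D_l)$ as, respectively, the ring of invariants and a space of semi-invariants for a reductive group action on a finitely generated $\BC$-algebra, and then to invoke Hilbert's finiteness theorem together with its extension to isotypic components.

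Since $X$ is an MDS, $\Pic(X)$ is a finitely generated abelian group and $\Cox(X)$ is a finitely generated $\BC$-algebra. Let $T = \Hom(\Pic(X), \BC^*)$; in characteristic zero this is a diagonalizable algebraic group (a torus, possibly extended by a finite abelian group) and hence reductive, and the $\Pic(X)$-grading on $\Cox(X)$ corresponds to the canonical $T$-weight decomposition. The key construction is to introduce formal variables $z_1, \ldots, z_l$ of $T$-weights $-D_1, \ldots, -D_l$ and form
\[
A \;=\; \Cox(X)[z_1, \ldots, z_l],
\]
which remains a finitely generated $\BC$-algebra and inherits a $T$-action extending the one on $\Cox(X)$.

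Since a monomial $s \cdot z_1^{m_1} \cdots z_l^{m_l}$ with $s \in H^0(\CO_X(g))$ has $T$-weight $g - \sum_i m_i D_i$, one reads off canonical identifications
\[
A^T \;\isom\; \bigoplus_{m \in \BN^l} H^0\!\Bigl(\CO_X\!\Bigl(\sum_{i=1}^l m_i D_i\Bigr)\Bigr) \;=\; R(X, D_1, \ldots, D_l)
\]
as $\BC$-algebras, and similarly
\[
A_D \;\isom\; \bigoplus_{m \in \BN^l} H^0\!\Bigl(\CO_X\!\Bigl(D + \sum_{i=1}^l m_i D_i\Bigr)\Bigr) \;=\; M(X, D, D_1, \ldots, D_l)
\]
as $A^T$-modules, where $A_D \subset A$ denotes the weight-$D$ semi-invariant subspace. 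That these are ring/module isomorphisms rather than merely linear ones is because $(sz^m)(s'z^{m'}) = (ss')z^{m+m'}$ in $A$ exactly matches the multi-graded multiplication of sections in $R$ and $M$.

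By Hilbert's theorem for reductive group actions, $A^T$ is a finitely generated $\BC$-algebra, giving the first assertion. For the module claim I would apply the same result to a further enlargement $A[w]$ with $w$ of $T$-weight $-D$: the resulting invariants form a finitely generated $\BN$-graded $\BC$-algebra $\bigoplus_{n \geq 0} A_{nD}\, w^n$ whose degree-zero piece is $A^T$, and its degree-one component, namely $A_D$, is then a finitely generated $A^T$-module. The step most in need of care is the initial identification with torus invariants; once it is in place, everything else is formal. The $\BQ$-effectivity hypothesis on the $D_i$ plays no role in the argument beyond ensuring that the rings involved are non-trivial.
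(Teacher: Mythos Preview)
Your argument is correct and takes a genuinely different route from the paper's. The paper proceeds geometrically: it develops a Zariski-type decomposition $D = \Delta_D + F_D$ on an MDS, partitions $\BN^l$ into finitely many rational polyhedral pieces $\overline{V}_{A,B}$ according to which face $A$ of a nef cone $(f_i)_*\NM^1(X_i)$ contains the moving part and which divisor $B$ supports the fixed part, reduces to simplicial cones, and then proves an explicit surjectivity statement (Proposition~2.8) for the multiplication maps $H^0(L+\sum n_i L_i)\otimes H^0(L_j)\to H^0(L+\sum n_i L_i + L_j)$ for $n_j\gg 0$, using a product embedding into $\prod \PP H^0(\Delta_{L_i})^\vee$ and Serre vanishing. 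Your approach bypasses all of this by a single application of Hilbert's theorem to $\Cox(X)[z_1,\ldots,z_l]$ under the diagonalizable group $T=\Hom(\Pic(X),\BC^*)$, together with the standard trick of reading off a fixed isotypic component as the degree-one piece of the invariants in a further polynomial extension.

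What each approach buys: yours is dramatically shorter and shows at once that the $\BQ$-effectivity hypothesis on the $D_i$ is irrelevant. The paper's approach, while longer, yields structural information that your argument does not: the chamber decomposition $\{\overline{V}_{A,B}\}$ of the cone spanned by the $D_i$, the linearity of the fixed part on each chamber, and the uniform surjectivity bound in Proposition~2.8. These are of independent interest and feed into the paper's later analysis of Poincar\'e series and Zariski decomposition chambers in \S3, whereas your invariant-theoretic proof, being a black box, gives the finite generation but no handle on generators or relations.
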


Y. Hu and S. Keel proved that for a smooth projective variety $X$ whose $\Pic(X)$ is a free abelian group of finite rank, $\Cox(X)$ is finitely generated if and only if $X$ is a MDS \cite{H-K}. So the above theorem implies \eqref{E504}.
See \S \ref{GSRING2SEC2} for the definition of MDS.

It should be pointed out that the above theorem is known to the experts, although the exact statement, to the best of our knowledge, did not appear in the literature. There are similar results in \cite{H-K}, \cite{CasciniLazicMMP1} and \cite{OkawaImageMDS}.
However, these results do not directly apply here. For example, in \cite[Lemma 2.20]{OkawaImageMDS}, it was proved that the ring
$$
R(X, \Gamma) = \bigoplus_{D\in \Gamma} H^0(D)
$$
is finitely generated, where $\Gamma$ is the subgroup of $\DIV(X)$ generated by $D_i$ for $i=1,2,...,l$. We have an obvious ring homomorphism
$$
\begin{tikzcd}
R(X, D_1, D_2, ..., D_l) \ar{r}{\phi} & R(X, \Gamma)
\end{tikzcd}
$$
But $\phi$ is neither injective nor surjective in general. It is not surjective because there might be effective divisors $D\in \Gamma$ such that
$D$ cannot be written as $D = \sum m_i D_i$ for some $m_i\in \BN$. Therefore, we cannot derive the finite generation of $R(X, D_1, D_2, ..., D_l)$ from that of $R(X, \Gamma)$.

As another note, in \cite[Theorem 2.13]{H-K}, it was proved that the cone generated by $D_1,D_2,...,D_l$ has a set of generators $D_1', D_2', ..., D_m'$ such that
$R(X, D_1', D_2', ..., D_m')$ is finitely generated. Since $\{D_i\}$ and $\{D_j'\}$ might be different, the rings $R(X,D_1,D_2,...,D_l)$ and $R(X,D_1',D_2',...,D_m')$ are not necessarily the same; finite generation of one of them does not imply that of the other.

On surfaces, rationality of Poincar\'e series is closely related to
Zariski's conjecture on linear systems, proved by Cutkosky and Srinivas in \cite{ZariskiConjecture}:

\begin{thm*}[Zariski's Conjecture by Cutkosky-Srinivas]\label{ZC}
Let $X$ be a smooth projective surface over $\BC$ and $D$ be a divisor on $X$. Then
there exist periodic functions $a(n)$, $b(n)$ and $c(n)$ such that
\begin{equation}\label{E511}
h^0(nD) = a(n) n^2 + b(n) n + c(n)
\end{equation}
for $n$ sufficiently large. In addition, $a(n)$ and $b(n)$ are constants if $D$ is effective.
\end{thm*}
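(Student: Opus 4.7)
The plan is to use the Zariski decomposition of $D$ to reduce the problem to a nef $\BQ$-divisor, then combine a Riemann-Roch calculation with a case analysis on the numerical dimension of the positive part.

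\textbf{Reduction.} If no positive multiple of $D$ is linearly equivalent to an effective divisor, then the semigroup $\{n \geq 0 : h^0(nD) > 0\}$ is bounded, so $h^0(nD) = 0$ for $n$ large and the conclusion is trivial with $a = b = c = 0$. Otherwise $D$ is $\BQ$-effective and admits a Zariski decomposition $D = P + N_0$, where $P$ is a nef $\BQ$-divisor and $N_0 = \sum n_i E_i$ is an effective $\BQ$-divisor with negative definite intersection matrix satisfying $P \cdot E_i = 0$ for every component $E_i$ of $N_0$. A standard consequence of the negative definiteness of $N_0$ is that $h^0(nD) = h^0(\lfloor nP \rfloor)$ for every $n \geq 0$, so it suffices to analyze the right hand side.

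\textbf{Riemann-Roch.} Let $\epsilon_n = \{nP\}$ denote the fractional part of $nP$, whose coefficients are periodic in $n$ with period dividing the denominator of $P$. Riemann-Roch on the surface yields
\[
\chi(\lfloor nP \rfloor) = \frac{P^2}{2}\, n^2 + \left(- P \cdot \epsilon_n - \frac{P \cdot K_X}{2}\right) n + \left(\frac{\epsilon_n^2 + \epsilon_n \cdot K_X}{2} + \chi(\CO_X)\right),
\]
which is already of the form $a(n)n^2 + b(n)n + c(n)$ with periodic coefficients. The distinguishing feature of the effective case enters here: if $D$ is an integer effective divisor, then for any prime component $E \not\subseteq \supp N_0$ one has $P_E = D_E - (N_0)_E = D_E \in \BZ$, so every non-integer component of $P$ lies in $\supp N_0$. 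Hence $\supp \epsilon_n \subseteq \supp N_0$, and the orthogonality $P \cdot E_i = 0$ forces $P \cdot \epsilon_n = 0$, collapsing $a(n)$ and $b(n)$ to the constants $P^2/2$ and $-P \cdot K_X/2$ respectively, while $c(n)$ remains genuinely periodic.

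\textbf{Higher cohomology.} Since $h^0 = \chi + h^1 - h^2$, it remains to show that $h^1(\lfloor nP \rfloor) - h^2(\lfloor nP \rfloor)$ is periodic (and in particular bounded in the effective case, so as not to disturb the constancy of $a$ and $b$). Serre duality gives $h^2(\lfloor nP \rfloor) = h^0(K_X - \lfloor nP \rfloor) = 0$ for $n \gg 0$. The periodicity of $h^1$ I would establish by splitting on the numerical dimension $\nu(P)$: (i) if $P \equiv 0$ numerically, some multiple of $P$ is trivial, so $\lfloor nP \rfloor$ ranges over finitely many line bundles up to torsion, making $h^1$ bounded and periodic; (ii) if $P^2 > 0$, Kawamata-Viehweg vanishing applied to $\lfloor nP \rfloor - K_X$ kills $h^1(\lfloor nP \rfloor)$ for $n \gg 0$; (iii) if $P^2 = 0$ but $P \not\equiv 0$, a positive multiple of $P$ is numerically proportional to a fiber of a fibration $f : X \to C$ onto a smooth curve, and one computes $h^0$ and $h^1$ of $\lfloor nP \rfloor$ via the Leray spectral sequence for $f$, with periodicity arising from the interaction of $\epsilon_n$ with the fiber components and from the finite order of the torsion line bundles on $C$ that arise in the pushforward.

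\textbf{Main obstacle.} Case (iii) is the technical heart of the argument: one must control the coherent sheaves $f_* \CO_X(\lfloor nP \rfloor)$ and $R^1 f_* \CO_X(\lfloor nP \rfloor)$ on $C$ uniformly in $n$, handling the fractional part $\epsilon_n$ supported in fibers with care. The cases $\nu(P) \in \{0,2\}$ follow essentially from Riemann-Roch and vanishing, but $\nu(P) = 1$ is where the genuine Cutkosky-Srinivas input is needed.
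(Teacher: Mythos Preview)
The paper does not prove this theorem; it is quoted as a result of Cutkosky--Srinivas and used as input. So there is no in-paper proof to compare against. Your outline does track the broad shape of the original Cutkosky--Srinivas argument (Zariski decomposition, Riemann--Roch, then control of $h^1$), and you are right that the case $P^2=0$, $P\not\equiv 0$ is where the real work lies. But two of your three cases have genuine gaps.

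\textbf{Case (iii).} Your claim that a positive multiple of $P$ is numerically a fiber of some fibration $f\colon X\to C$ is false in general. Take $X=\BP(\CO_B\oplus L)$ for $B$ a curve of genus $\ge 1$ and $L\in \Pic^0(B)$ non-torsion; the section $C_0$ with $C_0^2=0$ is nef, not semi-ample (since $h^0(nC_0)=1$ for all $n$), and not numerically proportional to the fiber $F$ of the unique fibration $X\to B$ (indeed $C_0\cdot F=1$). So the Leray route is blocked. The actual Cutkosky--Srinivas argument does not produce a fibration: it reduces $h^1(\lfloor nP\rfloor)$ to cohomology on a fixed curve in $X$ via an exact sequence (compare the paper's \S\ref{GSRING2SEC003}), and then invokes the algebraic-groups statement quoted here as Theorem~3 to force periodicity of $h^i$ for a one-parameter family of numerically trivial line bundles on that curve. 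Your final sentence gestures at needing ``genuine Cutkosky--Srinivas input,'' but the input required is precisely Theorem~3, and it enters through a curve in $X$, not through the base of a fibration.

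\textbf{Case (ii).} Kawamata--Viehweg as you invoke it needs $\lfloor nP\rfloor - K_X$ (or a $\BQ$-perturbation with klt boundary) to be nef, which is not automatic for $n\gg 0$ when $-K_X$ is not nef. This is repairable (e.g.\ via Fujita-type vanishing, or by the same reduction-to-a-curve trick as in case (iii)), but it is not the one-liner you suggest.
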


It follows from \eqref{E511} that $R_{X,D}$ is rational for every divisor $D$ on a smooth projective
surface $X$. Indeed, \eqref{E511} can be interpreted as
\begin{equation*}\label{E512}
\delta_r^3 (h^0(nD)) = 0
\end{equation*}
for $n$ sufficiently large, where $r\in \BZ^+$ is a constant and $\delta_r$ is the difference operator
defined by $\delta_r (f(n)) = f(n) - f(n - r)$. Note that
\begin{equation*}\label{E513}
(1-t^r) R_{X,D}(t) = \sum \delta_r(h^0(nD)) t^n.
\end{equation*}
Therefore,
\begin{equation}\label{E514}
R_{X,D}(t) = \frac{f(t)}{(1-t^r)^3}
\end{equation}
for some $f(t)\in \BZ[t]$. Furthermore, if $D$ is effective, we have
\begin{equation*}\label{E515}
\delta_1^2 \delta_r (h^0(nD)) = 0
\end{equation*}
and hence
\begin{equation}\label{E516}
R_{X,D}(t) = \frac{f(t)}{(1-t)^2 (1-t^r)}
\end{equation}
for some $f(t)\in \BZ[t]$. It is basically the same discussion on \cite[p.535]{ZariskiConjecture}. Zariski's conjecture
can be put in the following equivalent form:

\begin{thm**}[Zariski's conjecture in terms of Poincar\'e series]\label{ZC2}
Let $X$ be a smooth projective surface over $\BC$ and $D$ be a divisor on $X$. Then $R_{X,D}$ is given by \eqref{E514} for arbitrary $D$ and by \eqref{E516} for effective $D$.
\end{thm**}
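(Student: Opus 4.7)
The plan is to establish the equivalence in the two directions separately. The forward implication, from Zariski's formula \eqref{E511} to rationality of the stated form, is essentially contained in the preceding discussion: after replacing the possibly distinct periods of $a(n)$, $b(n)$, $c(n)$ with their least common multiple $r$, the identity $\delta_r^3(h^0(nD)) = 0$ (resp.\ $\delta_1^2\delta_r(h^0(nD)) = 0$ in the effective case) holds for $n$ large, and multiplying $R_{X,D}(t)$ by $(1-t^r)^3$ (resp.\ $(1-t)^2(1-t^r)$) then yields a polynomial.

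For the converse, the plan is a partial fraction expansion over $\BC$. Writing $1 - t^r = \prod_{k=0}^{r-1}(1 - \omega^k t)$ with $\omega = e^{2\pi i/r}$, one decomposes
\begin{equation*}
\frac{f(t)}{(1-t^r)^3} = p(t) + \sum_{k=0}^{r-1}\left(\frac{A_k}{1-\omega^k t} + \frac{B_k}{(1-\omega^k t)^2} + \frac{C_k}{(1-\omega^k t)^3}\right)
\end{equation*}
for some polynomial $p(t)$ and scalars $A_k, B_k, C_k \in \BC$. The identity $(1-\omega^k t)^{-m} = \sum_n \binom{n+m-1}{m-1} \omega^{kn} t^n$ then gives, for $n > \deg p$,
\begin{equation*}
h^0(nD) = \sum_{k=0}^{r-1}\left(\tfrac{C_k}{2}n^2 + \bigl(B_k + \tfrac{3C_k}{2}\bigr) n + (A_k + B_k + C_k)\right)\omega^{kn},
\end{equation*}
which is manifestly of the form $a(n)n^2 + b(n)n + c(n)$ with $a,b,c$ periodic of period dividing $r$.

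For the effective case, the factorization $(1-t)^2(1-t^r) = (1-t)^3\prod_{k=1}^{r-1}(1-\omega^k t)$ shows that the only pole of order exceeding one sits at $t = 1$. The partial fraction expansion therefore produces a triple pole at $t = 1$ together with simple poles at the nontrivial $r$-th roots of unity, and reading off the coefficient of $t^n$ produces $h^0(nD) = an^2 + bn + c(n)$ with $a, b$ constants and $c(n)$ periodic, recovering the effective version of \eqref{E511}. The main technical point is simply to confirm that the periodic functions so produced are integer valued, which follows from $h^0(nD) \in \BZ$ together with the uniqueness of the partial fraction decomposition; beyond this, the equivalence is a routine dictionary between rational generating series with prescribed pole structure and quasi-polynomial asymptotics of their coefficients, and no substantive obstacle is expected.
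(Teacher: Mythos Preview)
Your forward implication matches the paper exactly: the paper's entire argument for this theorem is the difference-operator computation in the paragraph preceding the statement, and you reproduce it. The paper does not actually write out the converse; it simply asserts that the two formulations are ``equivalent'' and refers to \cite[p.~535]{ZariskiConjecture}. Your partial-fraction argument fills this in correctly and is the standard way to pass from a rational generating series with prescribed denominator to quasi-polynomial asymptotics of its coefficients.

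One small slip: you write that ``the main technical point is simply to confirm that the periodic functions so produced are integer valued.'' They need not be, and the statement \eqref{E511} does not require it. For a big and nef $D$, for instance, $a(n) = D^2/2$ is only rational in general. What you actually need (and what does follow from $h^0(nD)\in\BZ$ together with the fact that $\sum_k \omega^{kn}$ is integer-valued) is that the functions $a(n),b(n),c(n)$ take real---in fact rational---values, so that the complex partial-fraction coefficients assemble into genuine periodic functions rather than merely finite Fourier sums with complex coefficients. This is immediate from the Galois symmetry $A_{-k}=\overline{A_k}$, etc., which in turn follows from $f\in\BZ[t]$. With that adjustment the argument is complete.
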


We want to generalize it to the Poincar\'e series
associated to multiple divisors. Let
\begin{equation*}\label{E517}
R_{X,D_1,D_2,...,D_l}(t_1, t_2, ..., t_l) = \sum_{m_i\in\BN} h^0(\sum_{i=1}^l m_i D_i) t_1^{m_1} t_2^{m_2}
... t_l^{m_l}
\end{equation*}
to be the (multi-variable) Poincar\'e series associated to the divisors $D_i$, as a formal series in $\BZ[[t_1,t_2,...,t_l]]$. For induction purpose, we would also introduce an additional divisor $D$ and define
\begin{equation*}\label{GSRING2E001}
M_{X,D,D_1,D_2,...,D_l}(t_1, t_2, ..., t_l)
= \sum_{m_i\in\BN} h^0(D+\sum_{i=1}^l m_i D_i) t_1^{m_1} t_2^{m_2}
... t_l^{m_l}.
\end{equation*}

As a generalization of Cutkosky-Srinivas' theorem,
we expect the following to be true:

\begin{conj}\label{CONJ000}
For all pseudo-effective $\BQ$-divisors
$D, D_1, D_2, ..., D_l$ on a smooth projective surface $X$ over $\BC$,
\begin{equation}\label{E518}
M_{X,D,D_1,D_2,...,D_l}(t_1, t_2, ..., t_l) = \frac{f(t_1, t_2, ..., t_l)}{g(t_1, t_2, ..., t_l)}
\end{equation}
is rational for some $f, g\in \BZ[t_1, t_2, ..., t_l]$ satisfying $g(0,0,...,0) = 1$.
\end{conj}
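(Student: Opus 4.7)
The plan is to mimic the single-divisor strategy of Cutkosky--Srinivas, replacing the analysis ``$n\to\infty$ along a ray'' by a simultaneous analysis over lattice points in the pseudo-effective cone. I would combine Zariski decomposition with the locally finite chamber structure of the big cone (Bauer--K\"uronya--Szemberg) to reduce to the nef case, then treat nef positive parts via Riemann--Roch on the big locus and via restriction to curves elsewhere, invoking the stated hypothesis on rationality of multi-variable Poincar\'e series on curves. Concretely, for each $m = (m_1, \ldots, m_l) \in \BN^l$ with $D(m) := D + \sum m_i D_i$ pseudo-effective, I write the Zariski decomposition $D(m) = P(m) + N(m)$. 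Intersecting the Zariski chambers with the rational polyhedral cone $C := \sum \BR_{\geq 0} D_i$ produces a (locally finite) family of rational subcones $\Sigma_\alpha$ on which the support of $N(\cdot)$ is constant and both $P(\cdot), N(\cdot)$ vary $\BQ$-linearly in $m$. Since every section of $D(m)$ vanishes along $N(m)$ to the prescribed order, one has $h^0(\CO_X(D(m))) = h^0(\CO_X(\lfloor P(m)\rfloor))$, and splitting the sum along cosets of $(r\BZ)^l \subset \BZ^l$ for a suitable common denominator $r$ reduces the problem on each chamber to rationality of $\sum_m h^0(\CO_X(P(m))) t^m$ with $P(m)$ integral, nef, and linearly varying in $m$.

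On the interior of a chamber where $P(m)$ is big, Kawamata--Viehweg vanishing gives $h^i(\CO_X(P(m))) = 0$ for $i \geq 1$ as soon as $P(m) - K_X$ is big and nef, and Riemann--Roch then yields
\begin{equation*}
h^0(\CO_X(P(m))) = \chi(\CO_X) + \tfrac{1}{2}\, P(m) \cdot (P(m) - K_X),
\end{equation*}
a polynomial in $m$ of total degree at most $2$. Standard Ehrhart-style generating-function manipulations over a rational polyhedral cone then express $\sum \chi(\CO_X(P(m))) t^m$ as a rational function in the $t_i$ whose denominator is a product of terms $1 - t^v$, with $v$ ranging over edge generators of $\Sigma_\alpha$ (refined as needed to track the cosets of $r\BZ^l$).

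The remaining contributions come from three sources: lattice points on common facets of adjacent chambers, the $h^1$-correction when $P(m) - K_X$ fails to be big and nef, and entire chambers on which $P(m)$ is nef but not big. For each, I would appeal to the restriction sequence
\begin{equation*}
0 \longrightarrow \CO_X(P(m) - E) \longrightarrow \CO_X(P(m)) \longrightarrow \CO_E(P(m)|_E) \longrightarrow 0,
\end{equation*}
with $E$ ranging over the finitely many curves appearing either in the supports $\{E_k^\alpha\}$ or in the base loci of the various $P(m)$: the kernel has a strictly smaller ``defect'' and is handled inductively, while the cokernel contributes a multi-variable Poincar\'e-type series on the curve $E$, rational by the stated hypothesis. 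The hard part will be ensuring that this inductive descent terminates in finitely many steps and that the rational contributions coming from the finitely many chambers, facets, and auxiliary curves assemble into a single rational function rather than merely a piecewise-rational expression. A secondary and arguably deeper obstacle is controlling $h^1(\CO_X(P(m)))$ precisely when $P(m)$ is nef but not ample: there neither Kawamata--Viehweg nor a clean curve restriction applies, and one must instead exploit the orthogonality $P(m) \cdot E_k^\alpha = 0$ built into the Zariski decomposition together with the Hodge index theorem. This last point is presumably why the paper's unconditional result is stated only for big divisors, leaving the full pseudo-effective conjecture open.
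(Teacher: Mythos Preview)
This statement is left open in the paper; there is no proof of it to compare against. What the paper does prove is the special case where all $D_i$ are big (\thmref{GSRING2THM000}), and even that is conditional on a separate conjecture about multi-variable Poincar\'e series on curves (\conjref{GSRING2CONJ000}). You are evidently aware of this, and your sketch is really an outline of a conditional argument for that same big-divisor case. In broad architecture it matches the paper's argument in \secref{GSRING2SEC003}: decompose the cone generated by the $D_i$ into finitely many Zariski chambers (the paper proves their rational-polyhedrality directly in \lemref{GSRING2LEMZDC} rather than citing Bauer--K\"uronya--Szemberg), on each chamber peel off the common negative part to reduce to big-and-nef positive parts, and then handle the big-and-nef case by Riemann--Roch plus a reduction of the higher $h^n$ to a curve.

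The one place your sketch is genuinely looser than the paper's proof is that big-and-nef step. You propose to kill $h^i$ via Kawamata--Viehweg once $P(m)-K_X$ is big and nef, and to mop up the residue by restricting to curves $E$ drawn from the Zariski negative supports or base loci. But the curve hypothesis you invoke (the paper's \conjref{GSRING2CONJ000}) requires each $\LL_i = \CO_E(D_i)$ to be \emph{numerically trivial} on $E$; your curves $E$ need not satisfy $D_i\cdot E = 0$, and without that the series on $E$ is not of the required shape. The paper engineers this rather carefully: it fixes an ample $A$ with $H^n(A + D + \sum m_i D_i)=0$ for all $n\ge 1$ and all $m_i\ge 0$ (Fujita or Kawamata--Viehweg vanishing), uses bigness of the $D_i$ to choose an effective $C\in|m\sum D_i - A|$ so that the restriction sequence gives $h^n(\CO_X(D+\sum m_i D_i)) = h^n(\CO_C(D+\sum m_i D_i))$ for $n\ge 1$ and $m_i\gg 0$, and then splits $C = C_0 + C_1$ with every component of $C_0$ orthogonal to all $D_i$ and every component of $C_1$ meeting $\sum D_i$ positively. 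The $C_1$-contribution to $h^n$ vanishes for $m_i\gg 0$, leaving only $C_0$, on which the $D_i$ restrict to numerically trivial bundles and \conjref{GSRING2CONJ000} applies. This reduction to a curve orthogonal to all the $D_i$ is the mechanism your outline is missing.
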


Of course, the above conjecture holds when $X$ is a MDS by Theorem \ref{THM001}.
The main reason we study this conjecture is that we need it for the following:

\begin{conj}\label{CONJ001}
Let $X$ be a smooth projective surface over $\BC$ whose $\Pic(X)$ is a free abelian group
of finite rank. Then $E_X$ is rational if and only if the closed cone $\NE^1(X)\subset H^2(X,\BR)$ of pseudo-effective divisors on $X$ is rational polyhedral.
\end{conj}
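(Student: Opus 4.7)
The plan is to prove the two directions separately, using the finite-generation setup from \thmref{THM001} (more generally, \conjref{CONJ000}) for the ``if'' direction and combinatorial properties of rational power series for the ``only if'' direction.

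For the direction $(\Leftarrow)$, assume that $\NE^1(X)$ is rational polyhedral. I would choose primitive integral representatives $D_1, \ldots, D_l \in \Pic(X)$ of the extremal rays; each $D_i$ is pseudo-effective. Next, triangulate $\NE^1(X)$ into finitely many simplicial rational sub-cones $\sigma_1, \ldots, \sigma_N$, with the generators of each $\sigma_j$ a linearly independent subset of $\{D_1, \ldots, D_l\}$. Using a standard half-open fundamental-parallelotope decomposition, the lattice points $\sigma_j \cap \Pic(X)$ split into a disjoint union of finitely many shifted free sub-semigroups of the form $\delta + \sum_i \BN D_{j,i}$, where $\delta$ ranges over the finitely many lattice points of the fundamental parallelotope and is itself pseudo-effective. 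The contribution of each such piece to $E_X$ equals
\begin{equation*}
u^\delta \cdot M_{X, \delta, D_{j,1}, \ldots, D_{j,k_j}}\bigl(u^{D_{j,1}}, \ldots, u^{D_{j,k_j}}\bigr),
\end{equation*}
which is rational in the Picard-group variables $u$ by \conjref{CONJ000}. Summing the finitely many rational contributions exhibits $E_X$ as a rational function.

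For the direction $(\Rightarrow)$, assume that $E_X$ is rational. The series $E_X$ has non-negative integer coefficients $h^0(\CO_X(D))$, and its support is precisely the set of effective divisor classes in $\Pic(X)$. By results on $\BN$-rational formal power series in commutative variables (Parikh's theorem and its extensions to commutative rational series; see e.g.~Salomaa--Soittola), the support of such a series is a semilinear subset of $\Pic(X)$, i.e., a finite union of translates of finitely generated sub-semigroups. Any semilinear subset of a lattice generates a rational polyhedral cone, which in our setting equals the effective cone. Taking closures yields $\NE^1(X) = \overline{\mathrm{Eff}(X)}$ rational polyhedral.

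The principal obstacle for $(\Leftarrow)$ is that the argument depends on \conjref{CONJ000}, which is itself unproved in general (though unconditional when $X$ is a MDS via \thmref{THM001}). For $(\Rightarrow)$, the delicate point is the transition from ``rational as a formal Laurent series over $\Pic(X)$'' to the $\BN$-rational framework: one must exploit the non-negativity of $h^0$ and choose a basis of $\Pic(X)$ so that the effective cone lies in a single orthant, allowing the Parikh-type semilinearity result to apply cleanly. This $\BN$-rationality step, rather than the polyhedral combinatorics, is where I expect the bulk of the technical work to lie.
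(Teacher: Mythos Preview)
Your $(\Leftarrow)$ direction is essentially identical to the paper's: the paper also triangulates $\NE^1(X)$ into rational simplicial cones, reduces to a single simplicial cone, and then uses the fundamental-parallelotope decomposition to express the contribution as a finite sum of series $t^E M_{X,E,F_1,\dots,F_a}$, invoking \conjref{CONJ000}. The only cosmetic difference is that the paper handles the overlaps between simplices by inclusion--exclusion over the intersections $S_I=\bigcap_{i\in I}S_i$, whereas you use a half-open disjoint decomposition; both are standard and equivalent. Note that, like the paper, your argument is conditional on \conjref{CONJ000}; the paper does not claim an unconditional proof of this direction.

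For $(\Rightarrow)$ the paper gives no argument at all: it simply cites \cite{KKT} and remarks that the implication holds in all dimensions. Your sketch via Parikh/semilinearity is therefore not what the paper does, and the gap you flag is real and, as stated, not closable along your lines. A multivariable series lying in $\BQ(t_1,\dots,t_n)\cap\BN[[t_1,\dots,t_n]]$ need not be $\BN$-rational in the automata-theoretic sense, so the Salomaa--Soittola/Parikh machinery does not apply directly, and it is not known (and likely false in general) that the \emph{support} of such a series is semilinear. What \cite{KKT} actually prove is weaker and exactly what is needed: for any rational series with non-negative coefficients, the \emph{closed cone} spanned by the support is rational polyhedral. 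Their argument works directly with the cone via the denominator of a rational expression, not through semilinearity of the support. So your diagnosis of where the difficulty lies is correct, but the proposed route through $\BN$-rationality is the wrong one; you should instead aim straight for the polyhedrality of the closed cone.
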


The ``only if'' part was proved in \cite{KKT}. Actually, we have
\begin{equation*}\label{GSRING2E002}
E_X \text{ rational } \Rightarrow \NE^1(X) \text{ rational polyhedral}
\end{equation*}
in all dimensions.

\begin{proof}[Proof that \conjref{CONJ000} $\Rightarrow$ \conjref{CONJ001}]
Assuming that $\NE^1(X)$ is rational polyhedral, we can divide it into a union of rational simplicial cones
\begin{equation*}\label{E519}
\NE^1(X) = S_1 \cup S_2 \cup ... \cup S_n
\end{equation*}
such that the intersections
\begin{equation*}\label{GSRING2E003}
S_I = \bigcap_{i\in I} S_i
\end{equation*}
are also rational simplicial for all $I\subset \{1,2,...,n\}$. Then
\begin{equation*}\label{GSRING2E004}
E_X = \sum_{\substack{I\subset \{1,2,...,n\}\\ I\ne \emptyset}} (-1)^{|I|+1} \sum_{D\in S_I\cap H^2(X,\BZ)}
h^0(D) t^D.
\end{equation*}
So it suffices to show that the series
\begin{equation*}\label{GSRING2E005}
\sum_{D\in S\cap H^2(X,\BZ)} h^0(D) t^D
\end{equation*}
is rational for every rational simplicial cone $S\subset \NE^1(X)$.

Suppose that $S$ is spanned by
\begin{equation*}\label{GSRING2E006}
S = \big\{ \lambda_1 F_1 + \lambda_2 F_2 + ... + \lambda_a F_a:
\lambda_i \ge 0\big\}
\end{equation*}
where $F_i$ are pseudo-effective divisors on $X$ spanning the extremal rays of $S$. Since $S$ is a simplicial cone, every divisor
$D\in S\cap H^2(X, \BZ)$ can be written in a unique way as
\begin{equation*}\label{E520}
D = E + m_1 F_1 + m_2 F_2 + ... + m_a F_a
\end{equation*}
for some 
\begin{equation*}\label{GSRING2E007}
E\in \Lambda = \{ \lambda_1 F_1 + \lambda_2 F_2 + ... + \lambda_a F_a:
0\le \lambda_i < 1
\}\cap H^2(X,\BZ)
\end{equation*}
and $m_1, m_2, ..., m_a\in \BN$.
Clearly, $\Lambda$ is a finite set. Then
\begin{equation*}\label{E521}
\begin{aligned}
\sum_{D\in S\cap H^2(X,\BZ)} h^0(D) t^D
&= \sum_{E\in \Lambda} \sum_{m_i\in \BN}
h^0(E+\sum m_i F_i) t^{E+\sum m_i F_i}
\\
&=\sum_{E\in \Lambda} t^E M_{X,E,F_1,F_2,...,F_a}(t^{F_1},t^{F_2},...,t^{F_a})
\end{aligned}
\end{equation*}
is rational by \conjref{CONJ000}.
\end{proof}

Unfortunately, we are not able to prove Conjecture \ref{CONJ000} for all $\BQ$-effective divisors $D_i$. At the moment, we have a partial generalization of Zariski's conjecture, assuming the rationality of Poincar\'e series of curves: 

\begin{conj}\label{GSRING2CONJ000}
Let $X$ be a smooth projective surface over $\BC$
and let $C$ be a closed subscheme of $X$ of pure dimension $1$, supported on 
a divisor of simple normal crossings (snc) on $X$. Let $\LL_1, \LL_2,...,\LL_a$ and $\D$ be line bundles on $C$ with $\LL_i$ numerically trivial. Then
the formal power series
\begin{equation*}\label{GSRING2E020}
\sum_{m_1,m_2,...,m_a\in \BN} h^q(\D\otimes \LL_1^{\otimes m_1} \otimes \LL_2^{\otimes m_2} \otimes ... \otimes \LL_a^{\otimes m_a})
t_1^{m_1} t_2^{m_2} ... t_a^{m_a} 
\end{equation*}
in $\BZ[[t_1,t_2,...,t_a]]$ is rational, i.e., lies in $\BQ(t_1,t_2,...,t_a)$, for all $q\in \BN$.
\end{conj}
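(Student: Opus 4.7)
The plan is to reduce the conjecture in two stages to a jumping-locus problem on the semi-abelian variety $\Pic^0(C_{\RED})$ and then to apply Mordell--Lang. First, I would pass to the case $q = 0$: since each $\LL_i$ is numerically trivial, for any coherent sheaf $\F$ on $C$ one has $\chi(\F \otimes \LL_1^{\otimes m_1} \otimes \cdots \otimes \LL_a^{\otimes m_a}) = \chi(\F)$ independent of $(m_1, \ldots, m_a)$. As $\dim C = 1$, only $h^0$ and $h^1$ contribute, so the Poincar\'e series in $h^0$ and in $h^1$ differ by the rational function $\chi(\D)/\prod_i(1-t_i)$, and it suffices to treat $q = 0$.

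Second, I would reduce to the reduced case. Let $\I$ be the nilradical of $\CO_C$; the filtration $\CO_C \supset \I \supset \I^2 \supset \cdots \supset \I^N = 0$ has quotients $\I^k/\I^{k+1}$ which are coherent $\CO_{C_{\RED}}$-modules. Tensoring with $\D \otimes \bigotimes_i \LL_i^{\otimes m_i}$ and chasing the resulting long exact sequences, the cohomologies $h^0$ and $h^1$ on $C$ are expressed through those of the coherent sheaves $\I^k/\I^{k+1} \otimes \D|_{C_{\RED}}$ on $C_{\RED}$ twisted by $\LL_i|_{C_{\RED}}^{\otimes m_i}$. Since the $\LL_i|_{C_{\RED}}$ remain numerically trivial, the problem reduces to the analogous rationality statement for coherent sheaves on the reduced snc curve $C_{\RED}$.

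Third, the identity component $\Pic^0(C_{\RED})$ is a semi-abelian variety: an extension of $\prod_j \Jac(C_j)$ by an algebraic torus of dimension equal to the first Betti number of the dual graph of $C_{\RED}$. Define $\phi \colon \BZ^a \to \Pic^0(C_{\RED})$ by $\phi(m) = \sum_i m_i [\LL_i]$; its image $\Gamma$ is a finitely generated subgroup. For any coherent sheaf $\F$ on $C_{\RED}$, the jumping loci
\[
W_q^r(\F) = \{L \in \Pic^0(C_{\RED}) : h^q(\F \otimes L) \geq r+1\}
\]
are closed subvarieties by semicontinuity, and $h^q(\F \otimes L) = \sum_{r \geq 0} \mathbf{1}[L \in W_q^r(\F)]$. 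By Vojta's Mordell--Lang theorem for semi-abelian varieties, $W_q^r(\F) \cap \Gamma$ is a finite union of cosets of subgroups of $\Gamma$, translated by torsion points. Pulling back by $\phi$ and intersecting with $\BN^a$ produces a finite union of lattice-point sets in rational polyhedra (affine sublattices of $\BZ^a$ intersected with the orthant), each of which has a rational generating function by Brion--Stanley. Since only finitely many $r$ contribute ($h^q$ being bounded), summing yields rationality.

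The chief obstacle is Step 3's reliance on semi-abelian Mordell--Lang, together with the task of organizing the stratification of $\BN^a$ induced by the $W_q^r(\F)$ into contributions with genuinely rational generating functions. A secondary but real technical issue is that the long exact sequences in Step 2 involve connecting homomorphisms whose rank can jump with $(m_i)$; controlling these jumps is itself a jumping-locus question for extension classes, which reduces once more to the same Mordell--Lang machinery and requires careful bookkeeping of how the various stratifications interact.
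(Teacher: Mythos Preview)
The statement you are trying to prove is labeled a \emph{conjecture} in the paper, and the authors do not prove it. They say explicitly: ``At the moment, Conjecture \ref{GSRING2CONJ000} is out of reach for us,'' and, crucially, ``we are unable to generalize this to prove Conjecture \ref{GSRING2CONJ000}, even under the assumption of Mordell--Lang.'' They also note that when $C$ is smooth the conjecture is implied by Mordell--Lang. So your Steps~1 and~3 are in line with what the authors already accept: the $\chi$-reduction to $q=0$ is fine, and for the reduced snc curve $C_{\RED}$ the group $\Pic^0(C_{\RED})$ is genuinely semi-abelian, so the jumping-locus argument via Vojta's theorem goes through.

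The gap is Step~2, and it is not a secondary technical issue but the entire difficulty. When $C$ is non-reduced, the filtration by powers of the nilradical gives graded pieces that are $\CO_{C_{\RED}}$-modules and hence see only $\LL_i|_{C_{\RED}}$, but the \emph{extensions} between successive graded pieces --- and therefore the connecting homomorphisms whose ranks you must control --- depend on the $\LL_i$ as line bundles on $C$, not merely on their restrictions. Concretely, the classes $[\LL_i]$ live in $\Pic^0(C)$, which sits in an exact sequence
\[
0 \longrightarrow U \longrightarrow \Pic^0(C) \longrightarrow \Pic^0(C_{\RED}) \longrightarrow 0
\]
with $U$ unipotent (a successive extension of copies of $\G_a$ coming from $H^1$ of the conormal sheaves). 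Mordell--Lang fails for groups with a unipotent part: already in $\G_a^2$ the subgroup $\BZ^2$ meets the parabola $y=x^2$ in $\{(n,n^2)\}$, which is not a finite union of cosets. So your assertion that ``controlling these jumps \dots\ reduces once more to the same Mordell--Lang machinery'' is exactly the step that does not go through: the relevant jumping loci for the connecting maps live over $\Pic^0(C)$, not over the semi-abelian $\Pic^0(C_{\RED})$, and there is no known analogue of Mordell--Lang that handles the unipotent direction. This is precisely why the authors could not close the argument even granting Mordell--Lang, and your sketch does not supply the missing idea.
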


This conjecture is closely related to Mordell-Lang, formulated by Serge Lang
\cite[Chap. 8, Sec. 8, p. 221]{LangDiophantine}. 
In the case that $C$ is smooth, it is implied by Mordell-Lang.
A further study of \ref{GSRING2CONJ000} is planned in a future paper.
Assuming it, we can prove the following:

\begin{thm}\label{GSRING2THM000}
Assuming Conjecture \ref{GSRING2CONJ000},
\eqref{E518} holds for all big divisors $D_1, D_2, ..., D_l$ and all divisors $D$ on a smooth projective surface $X$ over $\BC$.
\end{thm}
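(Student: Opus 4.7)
\emph{Setup.} The plan is to apply the Bauer--K\"uronya--Szemberg decomposition of the big cone into Zariski chambers, handle the main term via Riemann--Roch, and absorb the remaining cohomological corrections through Conjecture \ref{GSRING2CONJ000}. Since each $D_i$ is big, the rational polyhedral cone $\sigma = \sum_{i=1}^l \BR_{\geq 0} D_i$ sits inside the open big cone of $X$, so by compactness of a cross-section of $\sigma$ and local finiteness of the chamber decomposition, $\sigma$ meets only finitely many Zariski chambers $\Sigma_1,\dots,\Sigma_N$. Setting $L(\bar m) := D + \sum m_i D_i$ and discarding the bounded set of small $\bar m$ for which $L(\bar m)$ is not big (whose contribution is a polynomial in $t_1,\dots,t_l$), we obtain a partition of $\BN^l$ into rational polyhedral pieces $\Lambda_\nu$ indexed by the chambers, on each of which the Zariski decomposition $L(\bar m) = P_\nu(\bar m) + N_\nu(\bar m)$ is affine in $\bar m$ and $N_\nu$ is supported on a fixed snc curve $C_\nu \subset X$. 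After a further refinement of each $\Lambda_\nu$ into finitely many cosets of a common finite-index sublattice of $\BZ^l$, both $P_\nu$ and $N_\nu$ become integer-valued, and on such cosets $h^0(L(\bar m)) = h^0(P_\nu(\bar m))$: every effective divisor in $|L(\bar m)|$ must contain $N_\nu(\bar m)$ as a subdivisor, since the intersection matrix of the components of $N_\nu$ is negative definite.

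\emph{Main term via Riemann--Roch.} On each coset, the nef big integer divisor $P_\nu(\bar m)$ is affine in $\bar m$, so Riemann--Roch gives
\begin{equation*}
\chi(P_\nu(\bar m)) \;=\; \tfrac{1}{2}\, P_\nu(\bar m)\cdot(P_\nu(\bar m)-K_X) + \chi(\CO_X),
\end{equation*}
a polynomial of degree $\leq 2$ in $\bar m$. For $\bar m$ sufficiently deep in $\Lambda_\nu$, the class $P_\nu(\bar m) - K_X$ is nef and big, so Kawamata--Viehweg vanishing gives $h^i(P_\nu(\bar m)) = 0$ for $i \geq 1$ and hence $h^0(L(\bar m)) = \chi(P_\nu(\bar m))$. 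The generating function of a quadratic summed over a rational polyhedral subcone of $\BN^l$ is standard and rational.

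\emph{Corrections via Conjecture \ref{GSRING2CONJ000}.} The remaining contribution comes from the lower-dimensional strip near the chamber walls where Kawamata--Viehweg may fail. Using the short exact sequence
\begin{equation*}
0 \to \CO_X(P_\nu - C_\nu) \to \CO_X(P_\nu) \to \CO_{C_\nu}(P_\nu) \to 0
\end{equation*}
(and, when needed, the analogous sequence comparing $\lceil P_\nu\rceil$ with $\lfloor P_\nu\rfloor$ to reconcile the integer and $\BQ$-parts), each such correction reduces to a generating function of $h^q(\CO_{C_\nu}(P_\nu(\bar m)))$ on the snc curve $C_\nu$. Writing $P_\nu(\bar m) = P_\nu(D) + \sum_i m_i Q_{\nu,i}$, the defining property $P_\nu(\bar m)\cdot E = 0$ for every component $E \subset C_\nu$ forces $Q_{\nu,i}\cdot E = 0$ for each $i$, so $\LL_i := \CO_{C_\nu}(Q_{\nu,i})$ is numerically trivial on $C_\nu$ and
\begin{equation*}
\CO_{C_\nu}(P_\nu(\bar m)) \;=\; \D \otimes \LL_1^{\otimes m_1} \otimes \cdots \otimes \LL_l^{\otimes m_l}, \qquad \D := \CO_{C_\nu}(P_\nu(D)),
\end{equation*}
which is precisely the shape required by Conjecture \ref{GSRING2CONJ000}. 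The conjecture then supplies rationality of these correction series, and combined with the main term this proves \eqref{E518}.

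\emph{Principal difficulty.} The main obstacle is the bookkeeping: one must refine $\Lambda_\nu$ into sublattice cosets so that $\lceil N_\nu\rceil$, $\lfloor P_\nu\rfloor$, and the rounding differences $\lceil N_\nu\rceil - N_\nu$ are all piecewise affine in $\bar m$, and then systematically rewrite every $H^i$-term produced by the cascade of exact sequences transferring cohomology from $X$ down to $C_\nu$ as an $h^q$-series of the specific shape handled by Conjecture \ref{GSRING2CONJ000}. This is precisely why the full range $q \in \BN$ of the conjecture is required, rather than merely $q = 0$.
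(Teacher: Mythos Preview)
Your chamber decomposition and the reduction $h^0(L(\bar m)) = h^0(P_\nu(\bar m))$ within a chamber are correct and match the paper's Lemma \ref{GSRING2LEMZDC}. The gap is in the ``Main term via Riemann--Roch'' step: the claim that $P_\nu(\bar m) - K_X$ becomes nef and big once $\bar m$ is deep in $\Lambda_\nu$ is false in general. By the very definition of the chamber with support $C_\nu$, one has $P_\nu(\bar m)\cdot E = 0$ for every irreducible component $E\subset C_\nu$, and this holds for \emph{every} $\bar m$ in the chamber, not only near its walls. Hence $(P_\nu(\bar m) - K_X)\cdot E = -K_X\cdot E$, which is negative as soon as $E$ is, say, a smooth rational $(-n)$-curve with $n\ge 3$. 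So Kawamata--Viehweg vanishing for $P_\nu(\bar m)$ can fail on the whole full-dimensional chamber, and the ``corrections'' are not supported on a lower-dimensional strip. Your short exact sequence does not repair this either: it shifts the burden to $H^i(\CO_X(P_\nu - C_\nu))$, and $P_\nu - C_\nu$ has no reason to satisfy any vanishing, since it is not of the form (ample) $+$ (nef).

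The paper's route avoids this by choosing a different auxiliary curve. After the chamber reduction one has compatible Zariski decompositions, and the problem becomes computing $M_{X,D',P_1,\dots,P_l}$ with the $P_i$ big and nef. Now fix an ample $A$ with $H^n(A + D' + \sum m_i P_i)=0$ for all $n\ge 1$ and all $m_i\ge 0$ (Fujita vanishing), pick $C\in |m\sum P_i - A|$ for some $m$, and use the exact sequence for \emph{this} $C$: the kernel term is $\CO_X(A + D' + \sum (m_i-m)P_i)$, which now genuinely has vanishing higher cohomology, so $h^n$ on $X$ equals $h^n$ on $C$. After discarding from $C$ the components of positive $\sum P_i$-degree, the restrictions $\CO_C(P_i)$ become numerically trivial and Conjecture \ref{GSRING2CONJ000} applies. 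The essential point you are missing is that the curve carrying the correction must be manufactured from bigness via $|m\sum P_i - A|$, not taken to be the negative support $C_\nu$; only the former choice forces the remaining cohomology on $X$ to vanish.
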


The paper is organized as follows: we will prove Theorem \ref{THM001} in \S \ref{GSRING2SEC2} and Theorem \ref{GSRING2THM000} in \S \ref{GSRING2SEC003}.
We work exclusively over the complex numbers.

\section{Sub Cox Rings of Mori Dream Spaces}\label{GSRING2SEC2}

In this section, we prove Theorem \ref{THM001}.
Let us first fix some notations and go through some basic concepts.

Let $Z^k(X)$ be the free abelian group of codimension $k$ cycles on $X$. A $\BQ$-divisor ($\BR$-divisor) is a vector in $Z_\BQ^1(X) = Z^1(X)\otimes \BQ$ (respectively, $Z_\BR^1(X) = Z^1(X)\otimes \BR$). A divisor/$\BQ$-divisor/$\BR$-divisor $D = \sum x_i D_i$ is effective if $x_i\ge 0$ for all $x_i$, where $D_i$ are integral hypersurfaces of $X$. A divisor/$\BQ$-divisor/$\BR$-divisor $D$ is nef if $D\Gamma \ge 0$ for all irreducible curves $\Gamma\subset X$.
We say a divisor $D$ is {\em movable} if $|D|$ has no fixed part and 
a $\BQ$-divisor $D$ is {\em $\BQ$-movable} if $a D$ is movable for some positive integer $a$.

For $D = \sum_{i=1}^m x_i D_i\in Z_\BR^k(X)$ with $x_i\ne 0$ and $D_i$ reduced and irreducible, the support of $D$ is $\supp(D) = \sum_{i=1}^m D_i$.

Considering the image of $Z^k(X)$ in $H^{2k}(X, \BR)$, we
let $\Mov(X)$ and $\NM^1(X)$ be the smallest closed cones in $H^2(X, \BR)$ containing the images of
movable and nef divisors on $X$,
respectively, and let $\NE^k(X) = \NE_{\dim X - k} (X)$ be the smallest closed cone in $H^{2k}(X, \BR)$ containing the images of effective cycles of codimension $k$.
All cones in this paper are convex.

When we say a divisor/$\BQ$-divisor/$\BR$-divisor $D$ belongs to one of the above cones, we mean its image in $H^2(X,\BR)$ lies on the cone. For example, we call $D$ {\em pseudo effective} if $D\in \NE^1(X)$, which means that the image of $D$ in $H^2(X,\BR)$ lies on the cone $\NE^1(X)$.

Next, let us recall the concepts of Zariski decomposition \cite{ZariskiLinearSeries} and Mori dream space \cite{H-K}.

\begin{defn}\label{DEFZD}
Every pseudo effective $\BQ$-divisor $D$ on a smooth projective surface $X$ can be uniquely written as
\begin{equation}\label{GSRING2E021}
D = P + N
\end{equation}
in $\Pic_\BQ(X)$, where $P$ and $N$ are $\BQ$-divisors, $P$ is nef, $N$ is effective, $\supp(N)$ has negatively definite intersection matrix and $PN = 0$.
This is called the {\em Zariski decomposition} of $D$.
\end{defn}

\begin{defn}\label{DEFMDS}
A normal $\BQ$-factorial projective variety $X$ is a {\em Mori dream space} (MDS) if
\begin{enumerate}
\item[MD1.]
Every nef divisor on $X$ is semi-ample and
the nef cone $\NM^1(X)$ is generated by finitely many semi-ample divisors.
\item[MD2.]
There exists a finite collection of birational maps $f_i: X_i\dashrightarrow X$ for $i=1,2,...,a$
such that $f_i$ is an isomorphism in codimension one, $X_i$ is $\BQ$-factorial,
$\NM^1(X_i)$ is generated by finitely many semi-ample divisors and the moving cone of $X$ is given by
\begin{equation}\label{E523}
\Mov(X) = \bigcup_{i=1}^a (f_i)_* \NM^1(X_i).
\end{equation}
\end{enumerate}
\end{defn}

We need a version of Zariski decomposition in MDS.
There are various generalizations of Zariski decomposition to higher dimensions. For an effective $\BQ$-divisor $D$, one naive approach is to write $nD$ as the sum of its moving and fixed parts for $n\in \BZ^+$ such that $nD\in \Pic(X)$ and then take the limit as $n\to\infty$.
More precisely, we let
\begin{equation}\label{E509}
\begin{aligned}
F &= \sum_{G} \Big(\inf_{Q\in \Pi_D} \nu_G(Q) \Big) G
\\
\text{for }\Pi_D &= \Big\{Q \in Z_\BQ^1(X) \text{ effective}: 
D - Q\text{ is $\BQ$-movable}\Big\}
\end{aligned}
\end{equation}
where $G$ runs over all integral (i.e. reduced and irreducible) divisors on $X$ and $\nu_G(Q)$ is the multiplicity of $G$ 
in $Q$.

\begin{lem}\label{GSRING2LEM000}
For every effective $\BQ$-divisor $D$ on a normal $\BQ$-factorial projective variety $X$ with $F$ defined by \eqref{E509}, $F$ and $\Delta = D-F$ are effective $\BR$-divisors and $\Delta\in \Mov(X)$. If $\Mov(X)$ is rational polyhedral
and every $\BQ$-divisor in $\Mov(X)$ is $\BQ$-movable, then $F$ is an effective $\BQ$-divisor.
\end{lem}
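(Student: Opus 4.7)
The plan is to write $F = \sum_G \sigma_G G$ with $\sigma_G := \inf_{Q \in \Pi_D} \nu_G(Q)$, and to analyse each $\sigma_G$ via the fixed parts of the linear systems $|nD|$. First, $Q = D$ belongs to $\Pi_D$ because $D - D = 0$ is trivially $\BQ$-movable, so $0 \le \sigma_G \le \nu_G(D)$ for every prime divisor $G$; in particular $\sigma_G = 0$ whenever $G \not\subset \supp(D)$, making $F$ a finite sum and a well-defined effective $\BR$-divisor, and $\nu_G(\Delta) = \nu_G(D) - \sigma_G \ge 0$ makes $\Delta = D - F$ effective as well.

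To show $[\Delta] \in \Mov(X)$, for each $n$ such that $nD \in \Pic(X)$ decompose $|nD| = |M_n| + F_n$ into its moving and fixed parts. Since $F_n \le nD$, one has $\supp(F_n) \subseteq \supp(D)$, and $F_n/n \in \Pi_D$ because $n(D - F_n/n) = nD - F_n \sim M_n$ is movable. The inclusion $|mD| + |nD| \subseteq |(m+n)D|$ forces $F_{m+n} \le F_m + F_n$ componentwise, so $n \mapsto \nu_G(F_n)$ is subadditive and Fekete's lemma gives $\lim_n \nu_G(F_n)/n = \inf_n \nu_G(F_n)/n$. The bound $\sigma_G \le \nu_G(F_n)/n$ is immediate from $F_n/n \in \Pi_D$; conversely, for any $Q \in \Pi_D$ with $m(D-Q)$ movable, adding $mQ$ to an arbitrary divisor in the fixed-component-free system $|m(D-Q)|$ produces a divisor in $|mD|$, so $F_m \le mQ$ componentwise and $\nu_G(F_m)/m \le \nu_G(Q)$; taking infima yields the reverse inequality. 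Hence $F_n/n \to F$ coefficient-wise on $\supp(D)$, so $[M_n/n] = [D - F_n/n] \to [\Delta]$ in $H^2(X, \BR)$, and since $\Mov(X)$ is closed and contains every $[M_n/n]$, $[\Delta] \in \Mov(X)$.

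For the rationality statement, let $V \subset Z^1_\BR(X)$ be the finite-dimensional real subspace spanned by the primes in $\supp(D)$ and set
$$
\widetilde{\Pi}_D^\BR := \{Q \in V : Q \text{ effective}, \, [D-Q] \in \Mov(X)\}.
$$
Pulling back the rational polyhedral cone $\Mov(X)$ along the rational linear map $V \to H^2(X,\BR)$ and intersecting with the positive orthant of $V$ makes $\widetilde{\Pi}_D^\BR$ a non-empty (it contains $D$) pointed rational polyhedron. The hypothesis that every $\BQ$-divisor in $\Mov(X)$ is $\BQ$-movable identifies its rational points with $\Pi_D \cap V$, and the previous paragraph shows $\sigma_G = \inf_{Q \in \widetilde{\Pi}_D^\BR} \nu_G(Q)$ for each prime $G \subset \supp(D)$. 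As $\nu_G$ is a rational linear functional bounded below by $0$, standard linear-programming theory yields a rational vertex of $\widetilde{\Pi}_D^\BR$ attaining the infimum, so each $\sigma_G \in \BQ$ and $F$ is a $\BQ$-divisor. The only genuine technical point is the divisor-level comparison $F_m \le mQ$, which upgrades the numerical membership $[D-Q] \in \Mov(X)$ to an actual inequality of effective divisors; once this is in hand, Fekete's lemma, the closedness of $\Mov(X)$, and the LP extraction of rationality are all routine.
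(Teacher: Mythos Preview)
Your proof is correct and follows essentially the same strategy as the paper's: identify $F$ coefficient-wise with $\lim_n F_n/n$ where $F_n$ is the fixed part of $|nD|$, deduce $\Delta \in \Mov(X)$ as a limit of movable classes, and extract rationality of $F$ from the rational polyhedrality of the closure of $\Pi_D$ inside the span of the components of $D$. The only differences are in execution---you invoke Fekete's lemma where the paper constructs a monotone subsequence by hand via products $n_i = \prod_{G,j} l(G,j)$, and you obtain $\sigma_G \in \BQ$ by a per-coordinate linear-programming argument, whereas the paper observes in one stroke that $F$, being the componentwise minimum over the closed polytope, is an extremal point and hence rational.
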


\begin{proof}
Clearly, $\supp(F)\subset \supp(D)$. So $\nu_G(F) = 0$ for all but finitely many $G$ and $F$ is an effective $\BR$-divisor. And since $\nu_G(F) \le \nu_G(D)$ for all $G$, $\Delta$ is also effective.

Without loss of generality, let us assume that $D$ is an effective divisor. Let us write
\begin{equation*}\label{GSRING2E042}
n D = \Delta_n + F_n
\end{equation*}
in $\Pic(X)$ for all $n\in \BZ^+$, where
$\Delta_n$ and $F_n$ are the moving part and the fixed part of $|nD|$, respectively.
It is easy to see that
\begin{equation}\label{E530}
\frac{\nu_G(F_{ab})}{ab} \le \min\left(\frac{\nu_G(F_a)}{a}, \frac{\nu_G(F_b)}{b}\right)
\end{equation}
for all integral divisors $G$ and $a,b\in \BZ^+$ and
\begin{equation}\label{GSRING2E046}
F = \sum_{G} \Big(\inf_{n} \frac{\nu_G(F_n)}n \Big) G.
\end{equation}
So for every irreducible component $G\subset \supp(D)$, there exists an increasing sequence $\{l(G,i)\in \BZ^+: i=1,2,...\}$ such that
\begin{equation}\label{GSRING2E047}
\nu_G(F) = \lim_{i\to\infty}  \frac{\nu_G(F_{l(G,i)})}{l(G,i)}.
\end{equation}
Let
\begin{equation*}\label{GSRING2E048}
n_i = \prod_{\substack{G\subset \supp(D)\\ 1\le j\le i}} l(G,i).
\end{equation*}
By \eqref{E530}, \eqref{GSRING2E046} and \eqref{GSRING2E047}, we have
\begin{equation*}\label{GSRING2E049}
\frac{\nu_G(F_{n_1})}{n_1} \ge \frac{\nu_G(F_{n_2})}{n_2} \ge ...
\ge \nu_G(F) = \lim_{i\to\infty}  \frac{\nu_G(F_{n_i})}{n_i}
\end{equation*}
for all components $G\subset \supp(D)$. In other words,
\begin{equation*}\label{E510}
\frac{F_{n_1}}{n_1} \ge \frac{F_{n_2}}{n_2} \ge ...
\ge F = \lim_{i\to\infty} \frac{F_{n_i}}{n_i}
\end{equation*}
in $Z_\BR^1(X)$, where
we write $D_1 \le D_2$ ($D_2 \ge D_1$) if $D_2 - D_1$ is effective and
$D_1 < D_2$ ($D_2 > D_1$) if $D_1\le D_2$ and $D_1\ne D_2$.
It follows that 
\begin{equation*}\label{GSRING2E050}
\Delta = D - F = \lim_{i\to\infty} \frac{\Delta_{n_i}}{n_i} \in \Mov(X).
\end{equation*}

Suppose that $\Mov(X)$ is rational polyhedral and all $\BQ$-divisors in $\Mov(X)$ are $\BQ$-movable. 
Let $V$ be the closure of the set $\Pi_D$ defined by \eqref{E509}. More precisely, we let
\begin{equation*}\label{GSRING2E052}
V = \overline{\Big\{ Q = \sum_{i=1}^N x_i G_i: x_i\ge 0\in \BQ
\text{ and } D - Q\in \Mov(X)
\Big\}}
\end{equation*}
in $\BR^N = \{(x_1,x_2,...,x_N)\}$, where $G_1, G_2,..., G_N$ are the irreducible components of $\supp(D)$. It is easy to see that $V$ is a bounded closed convex set. And since $\Mov(X)$ is rational polyhedral,
$V$ must be a rational convex polyhedron.
By \eqref{E509}, $F\le Q$ for all $Q\in V$. So $F$ is an extremal point of $V$.
It follows that $F$ must be an effective $\BQ$-divisor.
\end{proof}

By the above lemma, every effective $\BQ$-divisor $D$ can be written as
\begin{equation*}\label{E524}
D = \Delta_D + F_D
\end{equation*}
with $F_D = F$ given by \eqref{E509}, where $\Delta_D\in \Mov(X)$ and $F_D$ are effective $\BR$-divisors. 
Alternatively, as in the proof of lemma, we may define $F_D$ as
\begin{equation*}\label{E527}
F_D = \inf_{nD\in \Pic(X)} \frac{F_{|nD|}}{n}
\end{equation*}
where $F_{|nD|}$ is the fixed part of the linear series $|nD|$.

When $X$ is a MDS, $\Mov(X)$ is rational polyhedral and every $\BQ$-divisor in $\Mov(X)$ is $\BQ$-movable
by MD2 in \ref{DEFMDS}.
So $\Delta_D$ and $F_D$ are effective $\BQ$-divisors and $\Delta_D$ is $\BQ$-movable.

\begin{defn}\label{GSRING2DEFPREC}
Let $V$ be a closed cone in $\BR^n$. For two points $p,q\in V$, we say that $p$ is more general than $q$ or $q$ is more special than $p$ in $V$, written as $p\succ_{_V} q$ or $q \prec_{_V} p$, if there exists $r > 0$ such that $rp - q\in V$. We will drop the notation $V$ and write $p\succ q$ or $q\prec p$ if $V$ is clear from the context.
\end{defn}

\begin{rem}\label{GSRING2REMPREC}
Let $V$ be a closed polyhedral cone in $\BR^n$. Let us recall that the dimension $r=\dim V$ of $V$ is the dimension of the smallest linear subspace $\Lambda$ of $\BR^n$ containing $V$ and the interior $V^\circ$ and the boundary $\partial V$ of $V$ are the interior and the boundary of $V$ in $\Lambda$, respectively. The faces of $V$ are subcones of $V$ defined as follows: Let $V = \cap_{j\in J} V_j$ be the intersection of finitely many half spaces $V_j$ with the corresponding hyperplane $\partial V_j$ containing the origin for $j\in J$. Then a face $W$ of $V$ is
the cone $\cap_{j\in I} \partial V_j\cap V$ for a subset $I\subset J$; when $I=\emptyset$, $W=V$ is regarded as the $r$-dimensional face of $V$ itself.

For every $p\in V$, there exists a unique face $W_p$ of $V$ such that $p\in W_p$ and $p\not\in W$ for any face $W$ of $V$ with $W\subsetneq W_p$.
It is easy to see that $p\succ q$ if and only if $W_p\supset W_q$.

For a point $q\in V$, $q\not\in W$ for any face $W$ of $V$ with $W\not\supset W_q$; otherwise, $q\in W\cap W_q\subsetneq W_q$. Therefore, there exists a minimal distance $\varepsilon > 0$ between $q$ and the faces $W$ of $V$ with $W\not\supset W_q$ 
since $V$ has only finitely many faces.
Then for all $p\in V$ and $||p - q|| < \varepsilon$,
we obviously have $W_p \supset W_q$ and hence $p\succ q$. In other words, the set $\{p\in V: p\succ q\}$ is open in $V$ for every point $q\in V$. On the other hand, the set $\{p\in V: p\prec q\}$ is the union of all faces $W$ of $V$ with $W\subset W_q$ and hence closed.
\end{rem}

\begin{lem}\label{GSRING2LEM001}
Let $X$ be a normal $\BQ$-factorial projective variety such that $\Mov(X)$ is rational polyhedral and every $\BQ$-divisor in $\Mov(X)$ is $\BQ$-movable.
Let $D$ be an effective $\BQ$-divisor on $X$.
\begin{enumerate}
\item
For all $\BR$-divisors $\widehat{\Delta}\in \Mov(X)$ and $\widehat{F} > 0$
satisfying that $\Delta_D \succ \widehat{\Delta}$ in $\Mov(X)$
and $\supp(F_D)\supset \supp(\widehat{F})$,
\begin{equation}\label{E526}
\widehat{D} = \widehat{\Delta} + \widehat{F}\not\in \Mov(X).
\end{equation}
\item 
For all $\BQ$-divisors $\widehat{\Delta}\in \Mov(X)$ and $\widehat{F} \ge 0$
satisfying that $\Delta_D \succ \widehat{\Delta}$ in $\Mov(X)$
and $\supp(F_D) \supset \supp(\widehat{F}) $,
\begin{equation}\label{E525}
\Delta_{\widehat{D}} = \widehat{\Delta}
\text{ and }
F_{\widehat{D}} = \widehat{F}
\end{equation}
for $\widehat{D} = \widehat{\Delta} + \widehat{F}$.
\item
For every effective $\BQ$-divisor $E$,
\begin{equation}\label{E531}
\lim_{n\to \infty}
\frac{\Delta_{nD+E}}n = \Delta_D \text{ and }
\lim_{n\to \infty}
\frac{F_{nD+E}}n = F_D.
\end{equation}
In addition, there exists $N\in \BN$ such that
\begin{equation}\label{GSRING2E057}
\supp(F_{nD + E}) \supset \supp(F_D),
\end{equation}
\begin{equation}\label{GSRING2E051}
\Delta_{nD + E} \succ \Delta_D
\end{equation}
and
\begin{equation}\label{GSRING2E055}
F_{nD+E} \ge (n-N) F_D
\end{equation}
for all $n\ge N$.
\end{enumerate} 
\end{lem}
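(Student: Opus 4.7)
For Part (1), I will argue by contradiction, assuming $\widehat{D} := \widehat{\Delta} + \widehat{F} \in \Mov(X)$. Since $\Delta_D \succ \widehat{\Delta}$, there exists $r>0$ with $r\Delta_D - \widehat{\Delta} \in \Mov(X)$, so for every $\epsilon \in (0, 1/r]$ the identity
\begin{equation*}
\Delta_D + \epsilon \widehat{F} = (\Delta_D - \epsilon \widehat{\Delta}) + \epsilon \widehat{D}
\end{equation*}
writes $\Delta_D + \epsilon\widehat{F}$ as a sum of two elements of $\Mov(X)$, hence places it in $\Mov(X)$. Shrinking $\epsilon > 0$ further, the hypothesis $\supp(\widehat{F}) \subset \supp(F_D)$ together with the strict positivity of $\nu_G(F_D)$ on each component $G$ of $\supp(\widehat{F})$ guarantees that $Q := F_D - \epsilon \widehat{F}$ is an effective $\BR$-divisor strictly smaller than $F_D$. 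But $D - Q = \Delta_D + \epsilon \widehat{F} \in \Mov(X)$, contradicting the coefficient-wise minimality of $F_D$. That minimality, originally stated for $\BQ$-divisors in \lemref{GSRING2LEM000}, extends to $\BR$-divisors because the feasibility set $V$ constructed in the proof of that lemma is already a closed rational polyhedron.

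Part (2) then follows in a few lines. Because $(\widehat{\Delta}, \widehat{F})$ is a valid movable-plus-effective decomposition of $\widehat{D}$, we have $F_{\widehat{D}} \le \widehat{F}$ by minimality. If the inequality were strict, set $\widetilde{F} := \widehat{F} - F_{\widehat{D}} > 0$; then $\Delta_{\widehat{D}} = \widehat{\Delta} + \widetilde{F} \in \Mov(X)$, while $\supp(\widetilde{F}) \subset \supp(\widehat{F}) \subset \supp(F_D)$ and $\Delta_D \succ \widehat{\Delta}$. Applying Part (1) to the pair $(\widehat{\Delta}, \widetilde{F})$ yields $\widehat{\Delta} + \widetilde{F} \notin \Mov(X)$, a contradiction.

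For Part (3), I first prove the two limits by compactness and minimality. The divisors $F_{nD+E}/n$ are supported on the fixed finite set $\supp(D) \cup \supp(E)$ and bounded above coefficient-wise by $F_D + F_E/n$, since $(n\Delta_D + \Delta_E) + (nF_D + F_E)$ is a valid movable-plus-effective decomposition of $nD+E$. Any subsequential limit $F^*$ then satisfies $F^* \le F_D$ and $D - F^* = \lim \Delta_{n_k D+E}/n_k \in \Mov(X)$ by closedness, so coefficient-wise minimality forces $F^* \ge F_D$ and hence $F^* = F_D$; consequently $F_{nD+E}/n \to F_D$ and $\Delta_{nD+E}/n \to \Delta_D$. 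Claim \eqref{GSRING2E057} is immediate from $\nu_G(F_{nD+E})/n \to \nu_G(F_D) > 0$ for each $G \subset \supp(F_D)$, and claim \eqref{GSRING2E051} follows from \remref{GSRING2REMPREC}: the set $\{p \in \Mov(X) : p \succ \Delta_D\}$ is an open neighborhood of $\Delta_D$ which must contain $\Delta_{nD+E}/n$ for $n$ large, and the relation $\succ$ is invariant under positive scaling.

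The main obstacle is \eqref{GSRING2E055}, which requires an $O(1)$ --- rather than merely $o(n)$ --- bound on $F_{nD+E} - nF_D$. For this I intend to exploit the rational polyhedrality of $\Mov(X)$: $F_{D'}$ is the coefficient-wise minimum extremal point of the polyhedron $\{F \ge 0 : [D' - F] \in [\Mov(X)]\}$, whose defining inequalities are linear in the parameter $D'$. A parametric linear programming argument then shows that as $D' = nD+E$ traverses a ray, the combinatorial type of the optimal vertex takes only finitely many values and eventually stabilizes, giving $F_{nD+E} = nA + B$ for constants $A, B$ and all $n \ge n_0$. The convergence just established forces $A = F_D$, so $F_{nD+E} - (n-N)F_D = NF_D + B$; since $\nu_G(B) = \nu_G(F_{nD+E}) \ge 0$ on components outside $\supp(F_D)$ and $\nu_G(F_D) > 0$ on $\supp(F_D)$, choosing $N$ to exceed both $n_0$ and $\max_{G \subset \supp(F_D)} |\nu_G(B)|/\nu_G(F_D)$ yields \eqref{GSRING2E055} for every $n \ge N$.
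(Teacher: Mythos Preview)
Your proofs of Parts (1), (2), and the limits in Part (3) are essentially identical to the paper's, down to the use of \remref{GSRING2REMPREC} for \eqref{GSRING2E051}.

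The one genuine difference is your treatment of \eqref{GSRING2E055}. You invoke a parametric linear programming argument to show that $n\mapsto F_{nD+E}$ is eventually affine. This is correct in outline, but you leave the stabilization of the active set as an assertion, and the final bookkeeping with $N$ and $B$ is more involved than necessary. The paper bypasses all of this with a direct application of Part (2): once you have chosen $N$ so that \eqref{GSRING2E057} and \eqref{GSRING2E051} hold at $n=N$, take $ND+E$ as the base divisor (playing the role of ``$D$'' in the statement of Part (2)) and set
\[
\widehat{\Delta} = \Delta_{ND+E} + (n-N)\Delta_D,\qquad \widehat{F} = F_{ND+E} + (n-N)F_D.
\]
Since $\Delta_{ND+E}\succ \Delta_D$ one checks $\Delta_{ND+E}\succ \widehat{\Delta}$, and since $\supp(F_{ND+E})\supset \supp(F_D)$ one has $\supp(F_{ND+E})\supset \supp(\widehat{F})$. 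Part (2) then gives the \emph{exact} formula
\[
F_{nD+E} = F_{ND+E} + (n-N)F_D \ge (n-N)F_D
\]
for every $n\ge N$, which is \eqref{GSRING2E055} immediately. This is both shorter and sharper than the LP route, and it explains why you proved Part (2) in the first place.
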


\begin{proof}[Proof of Lemma \ref{GSRING2LEM001} (1)]
Otherwise, suppose that $\widehat{D}\in \Mov(X)$.

First let us prove \eqref{E526} for $\BQ$-divisors $\widehat{\Delta}$ and $\widehat{F}$.
Since $\Delta_D \succ \widehat{\Delta}$ in $\Mov(X)$
and $\supp(\widehat{F}) \subset \supp(F_D)$,
there exists $\varepsilon \in \BQ^+$ such that 
$\Delta_D - \varepsilon \widehat{\Delta} \in \Mov(X)$ and
$\varepsilon \widehat{F} \le F_D$. Then
\begin{equation*}\label{E529}
D - (F_D - \varepsilon \widehat{F}) =
(\Delta_D - \varepsilon \widehat{\Delta})  + \varepsilon \widehat{D} \in \Mov(X)
\end{equation*}
with $0 \le F_D - \varepsilon \widehat{F} < F_D$, which contradicts the definition of $F_D$.

Next, let us deal with the general case that $\widehat{\Delta}$ and $\widehat{F}$ are $\BR$-divisors.
Let
\begin{equation*}\label{E532}
\begin{aligned}
W &= \Big\{ (\Delta, F):
\Delta_D \succ \Delta \in \Mov(X),\ F = \sum_{i=1}^N x_i G_i \ge 0
\\
&\hspace{60pt}\text{ and } \Delta + F \in \Mov(X)
\Big\}
\end{aligned}
\end{equation*}
in $H^2(X,\BR) \times \BR^N$, where $G_1,G_2,...,G_N$ are the irreducible components of $\supp(F_D)$. 
Since $\Mov(X)$ is a rational polyhedral cone, so is $W$. And since $(\widehat{\Delta}, \widehat{F})\in W$ for $\widehat{F}\ne 0$,
$W$ contains a point $(\Delta', F')\in H^2(X,\BQ) \times \BQ^N$
with $F'\ne 0$. We have proved such $D' = \Delta' + F'\not\in \Mov(X)$. Contradiction.
\end{proof}

\begin{proof}[Proof of Lemma \ref{GSRING2LEM001} (2)]
By the definition of $F_{\widehat{D}}$, we have 
$F_{\widehat{D}} \le \widehat{F}$. 
Since
\begin{equation*}\label{GSRING2E053}
\widehat{\Delta} + (\widehat{F} - F_{\widehat{D}}) = \widehat{D} - F_{\widehat{D}} = \Delta_{\widehat{D}}
\in \Mov(X)	
\end{equation*}
we must have $F_{\widehat{D}} = \widehat{F}$ by \eqref{E526} and hence \eqref{E525} follows.
\end{proof}

\begin{proof}[Proof of Lemma \ref{GSRING2LEM001} (3)]
Clearly,
\begin{equation*}\label{GSRING2E054}
\varlimsup_{n\to \infty}\frac{\nu_G(F_{nD+E})}n \le \varlimsup_{n\to \infty}\frac{\nu_G(F_{nD}) + \nu_G(E)}n = \nu_G(F_D)
\end{equation*}
for all integral divisors $G$. Let $\{n_i: i\in \BZ^+\}$ be an increasing sequence of positive integers such that the limit
\begin{equation*}\label{E534}
F' = \lim_{i\to \infty} \frac{F_{n_i D+E}}{n_i}  \le F_D
\end{equation*}
exists. To prove \eqref{E531}, it suffices to show that $F' = F_D$. Clearly, 
\begin{equation*}\label{E533}
D - F' = \lim_{i\to \infty}\frac{n_i D + E - F_{n_i D+E}}{n_i}  \in \Mov(X)
\end{equation*}
and hence
\begin{equation*}\label{E535}
D - F' = \Delta_D + (F_D - F') \in \Mov(X).
\end{equation*}
It follows from \eqref{E526} that $F' = F_D$. This proves \eqref{E531}.

Clearly, \eqref{GSRING2E057} follows directly from \eqref{E531}.
Combining \eqref{E531} with the argument in Remark \ref{GSRING2REMPREC}, we obtain \eqref{GSRING2E051}.

Let us choose $N$ such that \eqref{GSRING2E057}
and \eqref{GSRING2E051} hold for $n\ge N$.
Then applying \eqref{E525}, we have
\begin{equation*}\label{GSRING2E060}
F_{n D + E} = F_{ND + E} + (n-N) F_D
\end{equation*}
for all $n\ge N$. 
This proves \eqref{GSRING2E055}.
\end{proof}

Now we are ready to prove Theorem \ref{THM001}.

Let us assume that $D_1, D_2,...,D_l$ are effective $\BQ$-divisors.
Clearly,
\begin{equation*}\label{GSRING2E059}
\supp(F_{m_1 D_1 + m_2 D_2 + ... + m_l D_l}) \subset \supp(D_1 + D_2 + ... + D_l)
\end{equation*}
for all $m_i\in \BN$.
Let $X$ be a MDS whose moving cone $\Mov(X)$ is given by \eqref{E523}. For every face $A$ of one of the cones $(f_i)_* \NM^1(X_i)$ and
every effective divisor $B\in Z^1(X)$ with
$B\le \supp(\sum D_i)$, we define
\begin{equation}\label{GSRING2E056}
\begin{aligned}
V_{A,B} &= \Big\{
(m_1,m_2,...,m_l): m_i\ge 0\in \BQ,\ \Delta_{\sum m_i D_i}\in A^\circ\\
&\hspace{108pt} \text{ and }
\supp(F_{\sum m_i D_i}) = B
\Big\}
\end{aligned}
\end{equation}
and let $\overline{V}_{A,B}$ be the closure of $V_{A,B}$ in $\BR^l$.
If $V_{A,B}\ne \emptyset$, then by \eqref{E525}, $\overline{V}_{A,B}$ is the preimage of the rational polyhedral cone
\begin{equation*}\label{GSRING2E058}
\Big\{
\Delta + F: \Delta \in A,\ F\ge 0\text{ and } \supp(F)\subset \supp(B)
\Big\} \subset H^2(X, \BR)
\end{equation*}
under the map
\begin{equation*}\label{GSRING2E062}
\varphi(m_1,m_2,...,m_l) = \sum_{i=1}^l m_i D_i. 
\end{equation*}
Therefore, $\overline{V}_{A,B}$ is a rational polyhedral cone if $V_{A,B}\ne \emptyset$.

Obviously, there are only finitely many pairs $(A,B)$ and
\begin{equation*}\label{GSRING2E061}
\BN^l = \bigcup_{A,B} (\overline{V}_{A,B} \cap \BN^l)
\end{equation*}
with each $\overline{V}_{A,B} \cap \BN^l$ being a submonoid of $\BN^l$. For each rational polyhedral cone $V\subset \BR^l$, we let
\begin{equation}\label{GSRING2E063}
\begin{aligned}
R(X,V) &= \bigoplus_{\mathbf{v}\in V\cap \BN^l}
H^0(\varphi(\mathbf{v}))
\text{ and}\\
M(X,D,V) &= \bigoplus_{\mathbf{v}\in V\cap \BN^l}
H^0(D+\varphi(\mathbf{v}))
\end{aligned}
\end{equation}
be the subring and the submodule of $R(X,D_1,...,D_l)$ and
$M(X,D,D_1,...,D_l)$, respectively. Under these notations, we have
\begin{equation*}\label{GSRING2E064}
\begin{aligned}
R(X,D_1,D_2,...,D_l) &= \sum_{A,B} R(X,\overline{V}_{A,B}) 
\text{ and}\\
M(X,D,D_1,D_2...,D_l) &= \sum_{A,B} M(X,D,\overline{V}_{A,B}).
\end{aligned}
\end{equation*}
Thus, to prove the finite generation of $R(X,D_1,...,D_l)$ and
$M(X,D,D_1,...,D_l)$, it suffices to show the same for
$R(X,V)$ and $M(X,D,V)$ with $V = \overline{V}_{A,B}$. That is, it comes down to proving
\begin{equation}\label{GSRING2E065}
\begin{aligned}
& R(X,V)  
\text{ is a finitely generated $\BC$-algebra and}\\
& M(X,D,V) \text{ is a finitely generated module over } R(X,V)
\end{aligned}
\end{equation}
for each $V = \overline{V}_{A,B}$.

Each cone $V = \overline{V}_{A,B}$ has the following properties:
\begin{itemize}
\item
Since $A$ is a face of $(f_i)_* \NM^1(X_i)$ for some $i$, we may assume that $A\subset \NM^1(X)$ after replacing $X$ by $X_i$. And since every nef divisor on $X$ is semi-ample, we have
\begin{equation}\label{GSRING2E066}
\Delta_{\varphi(\mathbf{v})}\text{ is semi-ample for all } \mathbf{v} \in V_\BQ.
\end{equation}
\item
By \eqref{GSRING2E056}, we have
\begin{equation}\label{GSRING2E067}
F_{\varphi(\mathbf{v}_1 + \mathbf{v}_2)} = 
F_{\varphi(\mathbf{v}_1)} + F_{\varphi(\mathbf{v}_2)}
\text{ for all } \mathbf{v}_1, \mathbf{v}_2 \in V_\BQ.
\end{equation}
\end{itemize}

We can further divide each $\overline{V}_{A,B}$ into a finite union of rational simplicial cones. So it suffices to prove \eqref{GSRING2E065} for every rational simplicial cone $V\subset \BR^l$ satisfying
\eqref{GSRING2E066} and \eqref{GSRING2E067}.

Let us choose a set of generators $\mathbf{v}_1, \mathbf{v}_2,...,\mathbf{v}_r$ for the rational simplicial cone $V$ such that
\begin{itemize}
\item
$V = \{x_1 \mathbf{v}_1 + x_2 \mathbf{v}_2 + ... + x_r \mathbf{v}_r: x_i\ge 0\}$, $\dim V = r-1$,
\item
$\mathbf{v}_i\in V\cap \BZ^l$, $L_i = \varphi(\mathbf{v}_i)\in \Pic(X)$,
\item
$\Delta_{L_i}$ is base point free, $F_{L_i}$ is effective and
\begin{equation}\label{GSRING2E068}
F_{L_1 + L_2 + ... + L_r} = 
F_{L_1} + F_{L_2} + ... + F_{L_r}
\end{equation}
\end{itemize}
for $i=1,2,...,r$. Note that \eqref{GSRING2E067} and \eqref{GSRING2E068} are actually equivalent by \eqref{E525}.

Every $\mathbf v\in V\cap \BN^l$ can be written in a unique way as
\begin{equation*}\label{GSRING2E069}
\mathbf v = \mathbf u + n_1 \mathbf{v}_1 + n_2 \mathbf{v}_2 + ... + n_r \mathbf{v}_r
\end{equation*}
for some $n_i\in \BN$ and some
\begin{equation*}\label{GSRING2E070}
\mathbf u\in \Lambda = \{ x_1 \mathbf{v}_1 + x_2 \mathbf{v}_2 + ... + x_r \mathbf{v}_r: 0\le x_i < 1\}
\cap \BN^l.
\end{equation*}
Then we can rewrite \eqref{GSRING2E063} as
\begin{equation*}\label{GSRING2E071}
\begin{aligned}
R(X,V) &= \bigoplus_{\substack{\mathbf{u}\in \Lambda\\ n_1,n_2,...,n_r\in \BN}}
H^0(\varphi(\mathbf{u}) + \sum_{i=1}^r n_i L_i)
\text{ and}\\
M(X,D,V) &= \bigoplus_{\substack{\mathbf{u}\in \Lambda\\ n_1,n_2,...,n_r\in \BN}}
H^0(D + \varphi(\mathbf{u}) + \sum_{i=1}^r n_i L_i).
\end{aligned}
\end{equation*}
The finite generation of $R(X,V)$ and $M(X,D,V)$ will follow if
there exists a number $N$ such that the maps
\begin{equation*}\label{E528}
\begin{tikzcd}[column sep=small]
\displaystyle{H^0(\varphi(\mathbf{u}) + \sum_{i=1}^r n_i L_i) \otimes H^0(L_j)} \ar[two heads]{r} &
\displaystyle{H^0(\varphi(\mathbf{u}) + \sum_{i=1}^r n_i L_i + L_j)}\\
\displaystyle{H^0(D + \varphi(\mathbf{u}) + \sum_{i=1}^r n_i L_i) \otimes H^0(L_j)} \ar[two heads]{r} &
\displaystyle{H^0(D + \varphi(\mathbf{u}) + \sum_{i=1}^r n_i L_i + L_j)}
\end{tikzcd}
\end{equation*}
are surjective for $1\le j\le r$, $\mathbf{u}\in \Lambda$, $n_1,n_2,...,n_r\in \BN$ and
$n_j \ge N$. Note that $\Lambda$ is a finite set.

It suffices to prove the following:

\begin{prop}\label{GSRING2PROP000}
Let $X$ be a normal $\BQ$-factorial projective variety such that $\Mov(X)$ is rational polyhedral and every $\BQ$-divisor in $\Mov(X)$ is $\BQ$-movable and let $L_1, L_2,...,L_r$ be effective cartier divisors on $X$ such that
\begin{equation*}\label{E536}
\begin{aligned}
& \Delta_{L_i} \text{ is base point free, } H^0(\CO_X(F_{L_i})) \ne 0 \text{ for } i=1,2,...,r \text{ and}
\\
& F_{L_1 + L_2 + ... + L_r} = 
F_{L_1} + F_{L_2} + ... + F_{L_r}.
\end{aligned}
\end{equation*}
For every divisor $L\in \Pic(X)$, there exists $N\in \BN$ such that the map
\begin{equation}\label{GSRING2E072}
\begin{tikzcd}
\displaystyle{H^0(L + \sum_{i=1}^r n_i L_i) \otimes H^0(L_j)} \ar[two heads]{r} &
\displaystyle{H^0(L + \sum_{i=1}^r n_i L_i + L_j)}
\end{tikzcd}
\end{equation}
is surjective for $1\le j\le r$, $n_1,n_2,...,n_r\in \BN$ and $n_j \ge N$.
\end{prop}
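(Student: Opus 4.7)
The plan is to split $L_j = \Delta_{L_j} + F_{L_j}$ and factor the surjectivity statement through multiplication by a nonzero section $s_j \in H^0(\CO_X(F_{L_j}))$ (which exists by hypothesis) and through sections of the base-point-free part $\Delta_{L_j}$. Setting $M := L + \sum_i n_i L_i$, multiplication by $s_j$ fits in the short exact sequence
\[
0 \to \CO_X(M + \Delta_{L_j}) \xrightarrow{\cdot s_j} \CO_X(M + L_j) \to \CO_{F_{L_j}}(M + L_j) \to 0.
\]
The proposition then reduces to two claims valid for $n_j$ sufficiently large: (i) $F_{M+L_j} \ge F_{L_j}$, forcing $H^0(M+L_j) = s_j \cdot H^0(M + \Delta_{L_j})$; and (ii) the map $H^0(M) \otimes H^0(\Delta_{L_j}) \to H^0(M + \Delta_{L_j})$ is surjective. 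Granted (i) and (ii), every $\sigma \in H^0(M+L_j)$ writes as $s_j\tau = \sum_k a_k (s_j\gamma_k)$ with $a_k \in H^0(M)$, $\gamma_k \in H^0(\Delta_{L_j})$, and $s_j\gamma_k \in H^0(L_j)$, giving the desired surjectivity.

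For (i), one may assume $H^0(M + L_j) \ne 0$ (else the claim is trivial), so $M + L_j$ is effective. Applying \lemref{GSRING2LEM001}(3) with $D = L_j$ to the remaining summand $E = L + \sum_{i \ne j} n_i L_i$ (after absorbing its possible non-effectiveness into finitely many copies of $L_j$) yields $F_{M + L_j} \ge (n_j + 1 - N_1) F_{L_j}$, and hence $F_{M + L_j} \ge F_{L_j}$ once $n_j$ is large. The crucial technical point is the uniformity of $N_1$ in $(n_i)_{i \ne j}$, which I would handle using the additivity hypothesis $F_{L_1 + \cdots + L_r} = \sum_i F_{L_i}$ together with \lemref{GSRING2LEM001}(2): these force the Zariski decompositions of the family $\{L + \sum_{i \ne j} n_i L_i + k L_j\}$ to sit in a single face of the rational polyhedral cone $\Mov(X)$, so the stabilization index depends only on this face and not on the $(n_i)$.

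For (ii), the base-point-free Cartier divisor $\Delta_{L_j}$ defines a morphism $\pi_j\colon X \to Z_j$ with $\Delta_{L_j} = \pi_j^* H_j$ for some ample $H_j$ on $Z_j$; after Stein factorization we may assume $(\pi_j)_* \CO_X = \CO_{Z_j}$. The projection formula turns the map in (ii) into
\[
H^0(Z_j, \CG) \otimes H^0(Z_j, H_j) \to H^0(Z_j, \CG \otimes H_j),
\]
where $\CG = (\pi_j)_*\CO_X(M)$, and this is surjective once $\CG$ is $0$-regular with respect to the ample $H_j$ in the sense of Castelnuovo-Mumford. Writing $\CG = \CG_0 \otimes H_j^{n_j}$ with $\CG_0 = (\pi_j)_*\CO_X(M - n_j \Delta_{L_j})$, and using claim (i) to absorb the $n_j F_{L_j}$-contribution inside $\CG_0$ into a fixed base sheaf, Serre vanishing for the ample $H_j$ on $Z_j$ delivers $0$-regularity for $n_j$ large.

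The hard part will be the uniformity in $(n_i)_{i \ne j}$ of both the threshold $N_1$ in (i) and the Serre vanishing threshold in (ii). This uniformity is what the hypotheses are designed to provide: the rational polyhedrality of $\Mov(X)$ and the $\BQ$-movability of every $\BQ$-divisor in it yield only finitely many face strata for the relevant Zariski decompositions, and on $Z_j$ this translates into the sheaves $\{\CG_0\}$ arising from varying $(n_i)$ forming a bounded family up to twisting by pulled-back semi-ample line bundles, making the regularity bound uniform.
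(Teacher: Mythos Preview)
Your overall strategy---strip off $F_{L_j}$ via multiplication by a section $s_j$, then handle the base-point-free part $\Delta_{L_j}$ by pushing forward to a target where it becomes ample and invoking Serre vanishing---matches the paper's. The genuine gap is in step (ii), exactly at the ``hard part'' you flag: uniformity in $(n_i)_{i\ne j}$.

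You push forward along the single map $\pi_j\colon X\to Z_j$ determined by $\Delta_{L_j}$ alone, and then assert that the sheaves $\CG_0=(\pi_j)_*\CO_X(L+\sum_{i\ne j}n_iL_i+\cdots)$ form ``a bounded family up to twisting by pulled-back semi-ample line bundles''. But the $\Delta_{L_i}$ for $i\ne j$ are base-point-free on $X$, not pullbacks from $Z_j$; consequently $(\pi_j)_*\CO_X(n_i\Delta_{L_i})$ is not a line-bundle twist on $Z_j$---its rank grows with $n_i$. The family $\{\CG_0\}$ is therefore unbounded, and there is no reason a single regularity threshold should serve for all $(n_i)_{i\ne j}$.

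The paper's fix is to use all the $\Delta_{L_i}$ simultaneously: take
\[
\pi\colon X\longrightarrow P=\prod_{i=1}^r \PP H^0(\Delta_{L_i})^\vee,
\]
so that every $\Delta_{L_i}$ is the pullback of a factor of $\CO_P(1,\dots,1)$ and every $n_i$ becomes a genuine line-bundle twist on the fixed target $P$. After first applying \lemref{GSRING2LEM001}(3) once with $D=\sum_i L_i$ and the \emph{fixed} divisor $E=L$ (not with $E$ depending on the $n_i$, as you do) together with the additivity hypothesis, one obtains $F_{L+\sum n_iL_i}\ge\sum_i(n_i-b)F_{L_i}$ for all $n_i\ge b$, and the problem reduces to the surjectivity of
\[
H^0(\f(d_1,\dots,d_r))\otimes H^0(\CO_P(m_1,\dots,m_r))\longrightarrow H^0(\f(d_1+m_1,\dots,d_r+m_r))
\]
for the \emph{single} sheaf $\f=\pi_*\CO_X(L+b\sum_iL_i)$. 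Serre vanishing on $P$ now yields one threshold valid in all directions. The residual boundary region where some $n_i$ is small is disposed of by a short induction on $r$, which your proposal omits; the same induction would also have rescued your claim (i), whose threshold $N_1$ as written still depends on $(n_i)_{i\ne j}$.
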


\begin{proof}
We argue by induction on $r$.
For every $a\in \BZ^+$, by applying the induction hypothesis to $(L_1,...,\widehat{L}_m, ..., L_r, L+kL_m)$ for
each $m=1,2,...,r$ and $k=0,1,...,a-1$, we can find $N_a>a$ such that the map \eqref{GSRING2E072} is surjective for $1\le j\le r$, $n_1,n_2,...,n_r\in \BN$, $n_j \ge N_a$ and $\min\{n_i\} < a$. So it suffices to prove \eqref{GSRING2E072} for some $a$ and all $n_1,n_2,...,n_r\ge a$.

If $L+\sum n_i L_i$ is not effective for all $n_i\in \BN$, there is nothing to prove. Otherwise,
$L + b \sum L_i$ is effective for $b$ sufficiently large. And by \eqref{GSRING2E057} and \eqref{GSRING2E051}, we can find $b\in \BN$ such that
\begin{equation*}\label{GSRING2E073}
\begin{aligned}
\supp(F_{L + b(L_1+L_2+...+L_r)}) &\supset \supp(F_{L_1+L_2+...+L_r})\\
\Delta_{L + b(L_1+L_2+...+L_r)} &\succ \Delta_{L_1+L_2+...+L_r}.
\end{aligned}
\end{equation*}
By our hypothesis \eqref{GSRING2E068}, we have
\begin{equation*}\label{GSRING2E074}
\begin{aligned}
\supp(F_{L_1+L_2+...+L_r}) &= \supp(F_{L_1}) + \supp(F_{L_2}) + ... + \supp(F_{L_r})\\
\Delta_{L_1+L_2+...+L_r} &= \Delta_{L_1} + \Delta_{L_2} + ... + \Delta_{L_r}.
\end{aligned}
\end{equation*}
Consequently,
\begin{equation*}\label{GSRING2E075}
F_{L + n_1 L_1+ n_2 L_2+...+ n_r L_r} \ge \sum_{i=1}^r (n_i - b) F_{L_i}
\end{equation*}
for all $n_1,n_2,...,n_r\ge b$. It follows that
\begin{equation*}\label{GSRING2E076}
H^0(L + \sum_{i=1}^r n_i L_i) =  H^0( (L+b\sum_{i=1}^r L_i) + \sum_{i=1}^r (n_i - b) \Delta_{L_i})
\end{equation*}
for all $n_1,n_2,...,n_r\ge b$. So \eqref{GSRING2E072} becomes
\begin{equation}\label{GSRING2E077}
\begin{tikzcd}[column sep=small]
\displaystyle{H^0(G + \sum_{i=1}^r (n_i-b) \Delta_{L_i}) \otimes H^0(\Delta_{L_j})} \ar[two heads]{r} &
\displaystyle{H^0(G + \sum_{i=1}^r (n_i-b) \Delta_{L_i} + \Delta_{L_j})}
\end{tikzcd}
\end{equation}
for $G = L + b(L_1+L_2+...+L_r)$ with $\Delta_{L_i}$ base point free by our hypothesis.

Let $\pi$ be the morphism
\begin{equation*}\label{GSRING2E078}
\begin{tikzcd}
X \ar{r}{\pi} & 
\displaystyle{P = \prod_{i=1}^r \PP H^0(\Delta_{L_i})^\vee} 
\end{tikzcd}
\end{equation*}
given by the linear series $|\Delta_{L_1}| \times |\Delta_{L_2}| \times ... \times |\Delta_{L_r}|$. Then
\begin{equation*}\label{GSRING2E079}
H^0(G + \sum_{i=1}^r m_i \Delta_{L_i}) = H^0(\pi_* \CO_X(G) \otimes \CO_P(m_1,m_2,...,m_r))
\end{equation*}
for $m_i\in \BN$, where $\CO_P(m_1,m_2,...,m_r)$ is the line bundle on $P$ such that
$\pi^* \CO_P(m_1,m_2,...,m_r) = \CO_X(m_1 \Delta_{L_1} + m_2 \Delta_{L_2} + ... + m_r \Delta_{L_r})$.

By Serre vanishing, we can find $c,d,e\in \BN$ such that the map
\begin{equation*}\label{GSRING2E080}
\begin{tikzcd}
\CO_P(c,c,...,c)^{\oplus e} \ar[two heads]{r} & \f(d,d,...,d)
\end{tikzcd}
\end{equation*}
and the induced maps
\begin{equation*}\label{GSRING2E083}
\begin{tikzcd}
H^0(\CO_P(c+m_1,...,c+m_r))^{\oplus e} \ar[two heads]{r} & H^0(\f(d+m_1,...,d+m_r))
\end{tikzcd}
\end{equation*}
are surjections for all $m_i\in \BN$, where $\f = \pi_* \CO_X(G)$. Then we see that
\begin{equation}\label{GSRING2E081}
\begin{tikzcd}[column sep=small, row sep=6pt]
& H^0(\pi_* \CO_X(G)\otimes \CO_P(d_1,d_2,...,d_r)) \otimes H^0(\CO_P(m_1,m_2,...,m_r))\\
\ar[two heads]{r} & 
H^0(\pi_* \CO_X(G)\otimes \CO_P(d_1+m_1,d_2+m_2,...,d_r+m_r))
\end{tikzcd}
\end{equation}
is surjective for all $d_i\ge d, m_i\in \BN$ by the diagram
\begin{equation*}\label{GSRING2E082}
\begin{tikzcd}
H^0(\CO_P(c_i))^{\oplus e}\otimes H^0(\CO_P(m_i)) \ar{d}
\ar[two heads]{r} & H^0(\CO_P(c_i+m_i))^{\oplus e}\ar[two heads]{d}\\
H^0(\f(d_i)) \otimes H^0(\CO_P(m_i)) 
\ar{r} & 
H^0(\f(d_i+m_i))
\end{tikzcd}
\end{equation*}
where we write
\begin{equation*}\label{GSRING2E084}
\begin{aligned}
c_i &= d_i + (c-d),\\
\CO_P(c_i) &= \CO_P(c_1,c_2,...,c_r),\\
\CO_P(m_i) &= \CO_P(m_1,m_2,...,m_r),\\
\CO_P(c_i + m_i) &= \CO_P(c_1+m_1,c_2+m_2,...,c_r+m_r),\\
\f(d_i) &= \f(d_1,d_2,...,d_r) \text{ and}\\
\f(d_i+m_i) &= \f(d_1+m_1,d_2+m_2,...,d_r+m_r).
\end{aligned}
\end{equation*}
Then \eqref{GSRING2E077} follows from \eqref{GSRING2E081} for $n_i - b \ge d$. It suffices to take $a = b+d$.
\end{proof}

\section{Zariski's Conjecture and Rationality of Poincar\'e Series}\label{GSRING2SEC003}

\subsection{Sketch of the proof of Theorem \ref{GSRING2THM000}}

We are going to prove Theorem \ref{GSRING2THM000} in this section.
Here is an outline of our proof:
\begin{itemize}
\item
First, we prove it for all $D_i$ big and nef, assuming Conjecture \ref{GSRING2CONJ000}. Here we follow closely the argument of Cutkosky and Srinivas \cite{ZariskiConjecture} (see also \cite{GorlachZariski}).
\item
Next, we prove it for all $D_i = P_i + N_i$ satisfying
\begin{equation}\label{GSRING2E025}
\left(\sum P_i \right) \left(\sum N_i \right)  = 0
\end{equation}
where $D_i = P_i + N_i$ is the Zariski decomposition of $D_i$. We say that $D_1, D_2, ..., D_l$ have compatible Zariski decomposition if \eqref{GSRING2E025} holds.
\item
Finally, we reduce the case $D_i$ big to \eqref{GSRING2E025} using 
the theory of augmented and restricted base loci \cite{ELMN} and
some elementary convex geometry.
\end{itemize}

\subsection{Reduction to Conjecture \ref{GSRING2CONJ000}}

Suppose that all $D_i$ are big and nef. Let us argue by induction on $l$.

For each $N\in \BN$, we can write
\begin{equation*}\label{E522}
\begin{aligned}
M_{X,D,D_1,D_2,...,D_l}(t)
&= \sum_{m_1,m_2,...,m_l\ge N} h^0(D + \sum m_i D_i) t^m\\
&+\sum_{\substack{I\subset \{1,2,...,l\}\\
I\ne \emptyset}} (-1)^{|I|+1} \sum_{\substack{m_i < N\\ i\in I}}
h^0(D + \sum m_i D_i) t^m
\end{aligned}
\end{equation*}
where $t = (t_1,t_2,...,t_l)$ and $t^m
= t_1^{m_1} t_2^{m_2} ... t_l^{m_l}$.

For each $I\subset\{1,2,...,l\}$ and $I\ne \emptyset$, we have
\begin{equation*}\label{GSRING2E008}
\begin{aligned}
&\quad \sum_{\substack{m_i < N\\ i\in I}} h^0(D + \sum m_i D_i) t^m
\\
&= \sum_{\substack{m_i < N\\ i\in I}} \prod_{i\in I} t_i^{m_i} \Big(\sum_{\substack{m_j\in \BN\\ j\not\in I}}
h^0((D + \sum_{i\in I} m_i D_i) + \sum_{j\not\in I} m_j D_j)
\prod_{j\not\in I} t_j^{m_j}\Big)
\end{aligned}
\end{equation*}
By induction on $l$, we may assume that
\begin{equation*}\label{GSRING2E009}
\sum_{\substack{m_j\in \BN\\ j\not\in I}} h^0((D + \sum_{i\in I} m_i D_i) + \sum_{j\not\in I} m_j D_j)
\prod_{j\not\in I} t_j^{m_j}
\end{equation*}
is rational. Therefore,
\begin{equation*}\label{GSRING2E010}
\sum_{\substack{m_i < N\\ i\in I}} h^0(D + \sum m_i D_i) t^m
\end{equation*}
is rational for all $N\in \BN$ and $I\ne\emptyset \subset\{1,2,...,l\}$. So, to prove the rationality 
of $M_{X,D,D_1,D_2,...,D_l}(t)$,
it suffices to show the rationality of
\begin{equation}\label{GSRING2E011}
\sum_{m_1,m_2,...,m_l\ge N} h^0(D + \sum m_i D_i) t_1^{m_1}t_2^{m_2}
... t_l^{m_l}
\end{equation}
for some $N$. In other words, we can ``chop off'' the part of the series $M_{X,D,D_1,D_2,...,D_l}(t)$ of degree $< N$ in one of $t_i$. Of course, we want to choose $N$ sufficiently large.

Since $D_i$ are nef, there exists an ample divisor $A$ on $X$ such that
\begin{equation}\label{GSRING2E012}
H^n(X, A + D + \sum m_i D_i) = 0
\end{equation}
for all $n\ge 1$ and $m_i\ge 0$. If one applies Kawamata-Viehweg vanishing (cf. \cite{E-V}), it suffices to choose $A$ such that
$-K_X + A + D$ is ample. Alternatively, \eqref{GSRING2E012}
follows from Fujita's vanishing on surfaces, which works in any characteristic \cite{FujitaSemipositive}. 

Since $D_i$ are big, there exists $m\in \BZ^+$ such that
\begin{equation*}\label{GSRING2E013}
H^0(X, m \sum D_i - A) \ne 0.
\end{equation*}
Let $C\in |m \sum D_i - A|$. Then we have a short exact sequence
\begin{equation}\label{GSRING2E014}
\begin{tikzcd}[cells={anchor=center}]
& \CO_X(A+D + \sum (m_i-m) D_i)\ar[equal]{d}\\
0 \ar{r} & \CO_X(D + \sum m_i D_i - C) \ar{r} & \CO_X(D + \sum m_i D_i)
\\
\phantom{0}\ar{r} & \CO_C(D + \sum m_i D_i) \ar{r} & 0.
\end{tikzcd}
\end{equation}
For $m_i\ge m$, \eqref{GSRING2E012} and \eqref{GSRING2E014}
induce isomorphisms
\begin{equation}\label{GSRING2E015}
\begin{tikzcd}
H^n(\CO_X(D + \sum m_i D_i))\ar{r}{\simeq} & H^n(\CO_C(D + \sum m_i D_i))
\end{tikzcd}
\end{equation}
when $n\ge 1$.

We write $C = C_0 + C_1$ such that 
$C_i$ are effective, $C_0 \sum D_i = 0$ and
$\Gamma \sum D_i > 0$ for all irreducible components $\Gamma$ of $C_1$. Then by the exact sequence
\begin{equation*}\label{GSRING2E016}
\begin{tikzcd}[cells={anchor=center}]
0 \ar{r} & \CO_{C_1}(D + \sum m_i D_i - C_0) \ar{r} & \CO_C(D + \sum m_i D_i)
\\
\phantom{0}\ar{r} & \CO_{C_0}(D + \sum m_i D_i) \ar{r} & 0,
\end{tikzcd}
\end{equation*}
we see that
\begin{equation*}\label{GSRING2E017}
\begin{tikzcd}[cells={anchor=east}]
H^n(\CO_X(D + \sum m_i D_i))\ar{r}{\simeq} & H^n(\CO_C(D + \sum m_i D_i))\\
\phantom{0} \ar{r}{\simeq} & H^n(\CO_{C_0}(D + \sum m_i D_i))
\end{tikzcd}
\end{equation*}
for $n\ge 1$ and $\min(m_1,m_2,...,m_l) >> 1$.
Replacing $C$ by $C_0$, we have $C \sum D_i = 0$ and \eqref{GSRING2E015}.

In conclusion, there exist $N\in \BZ^+$ and an effective divisor $C$
such that
\begin{equation*}\label{GSRING2E018}
\begin{aligned}
\sum CD_i &= 0 \text{ and}\\
h^n(\CO_X(D + \sum m_i D_i)) &= h^n(\CO_{C}(D + \sum m_i D_i))
\end{aligned}
\end{equation*}
for $n\ge 1$ and $m_1,m_2,...,m_l\ge N$.

Obviously, the rationality of \eqref{GSRING2E011} follows from the rationality of
\begin{equation}\label{GSRING2E019}
\begin{aligned}
&\quad\sum_{m_1,m_2,...,m_l\ge N} h^n(\CO_X(D + \sum m_i D_i)) t_1^{m_1}t_2^{m_2}
... t_l^{m_l}
\\
&= \sum_{m_1,m_2,...,m_l\ge N} h^n(\CO_C(D + \sum m_i D_i)) t_1^{m_1}t_2^{m_2}
... t_l^{m_l}
\end{aligned}
\end{equation}
for all $n\ge 1$, combined with Riemann-Roch. 
Furthermore, since $C$ is supported on the union $\CC$ of integral curves
$\Gamma$ with $\Gamma D_i = 0$ and
there exists a birational morphism $f: X' \to X$ such that $f^{-1}(\CC)$ has simple normal crossings (for short, snc), we may assume that $C$ has snc support after replacing $X$ by $X'$.
Therefore, the rationality of \eqref{GSRING2E019} follows from Conjecture
\ref{GSRING2CONJ000}.

At the moment, Conjecture \ref{GSRING2CONJ000} is out of reach for us. 
It was proved in \cite[Theorem 8]{ZariskiConjecture} for $a = 1$, as a consequence of a key result of Cutkosky and Srinivas \cite[Theorem 7]{ZariskiConjecture}:

\begin{thm***}[Cutkosky-Srinivas]\label{THMCSDYNAMICS}
Let $G$ be a connected commutative algebraic group defined
over an algebraically closed field $k$ of characteristic $0$. Suppose that $x \in G(k)$
is such that the cyclic subgroup $\langle x\rangle = \{n\cdot x | n \in \BZ \}$ is Zariski dense in $G$. Then
any infinite subset of $\langle x\rangle$ is Zariski dense in $G$.
\end{thm***}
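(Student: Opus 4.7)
The plan is to suppose for contradiction that $S\subset\langle x\rangle$ is infinite while $H:=\overline{S}\subsetneq G$, and to extract from $S$ an infinite coset of a subgroup of $\langle x\rangle$ lying inside $H$, which by density of $\langle x\rangle$ must then fill all of $G$. The one preliminary I need is that for every nonzero $m\in\BZ$ the subgroup $\langle mx\rangle$ is also Zariski dense in $G$. This follows because $\langle x\rangle=\bigcup_{i=0}^{m-1}(ix+\langle mx\rangle)$ is a finite union, so
\begin{equation*}
G=\overline{\langle x\rangle}=\bigcup_{i=0}^{m-1}\bigl(ix+\overline{\langle mx\rangle}\bigr),
\end{equation*}
and irreducibility of the connected $G$ forces one translate $ix+\overline{\langle mx\rangle}$ to equal $G$, whence $\overline{\langle mx\rangle}=G$.

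With this in hand, I would invoke the Mordell--Lang principle for the cyclic subgroup $\langle x\rangle$ of the commutative algebraic group $G$ in characteristic zero: the intersection $H\cap\langle x\rangle$ is a finite union of cosets $c_j+\langle m_jx\rangle$ of subgroups of $\langle x\rangle$. Because $S\subset H\cap\langle x\rangle$ is infinite, some coset must be infinite and the corresponding $m_j$ nonzero; then $c_j+\langle m_jx\rangle\subset H$, so taking closures gives $G=c_j+\overline{\langle m_jx\rangle}\subset H$, contradicting $H\subsetneq G$.

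The main obstacle is precisely the appeal to Mordell--Lang in this generality, which is a deep theorem (Faltings for abelian varieties, Laurent for tori, and the extensions of Vojta and McQuillan for semiabelian groups). To give a more self-contained argument, I would instead run an induction on $\dim G$; the base case $\dim G=1$ is immediate, since the proper closed subvarieties of $\G_a$, $\G_m$, or an elliptic curve are finite. In the inductive step, one reduces to $H$ irreducible with $0\in H$ by passing to a component and translating by an element of $S$, and examines the stabilizer $K=\{g\in G:g+H=H\}$, a closed subgroup contained in $H$. When $K=H$ the set $H$ is itself a subgroup, and the density argument above applied to $\langle x\rangle\cap H=\langle mx\rangle$ finishes; when $\dim K^{\circ}>0$ one descends along $\pi\colon G\to G/K^{\circ}$, noting that the image of $S$ must be infinite (else $S$ lies in finitely many $K^{\circ}$-cosets, yielding an infinite $\langle mx\rangle\subset K^{\circ}\subsetneq G$, contradicting density of $\langle mx\rangle$), so induction gives $\overline{\pi(S)}=G/K^{\circ}$ and $H=\pi^{-1}(G/K^{\circ})=G$ since $H$ is $K^{\circ}$-invariant. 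The genuinely hard residual case is $K$ finite, where the induction stalls and one seems forced to invoke Mordell--Lang via the Chevalley d\'evissage $1\to L\to G\to A\to 1$, or a dynamical argument exploiting that the translates $\{nx+H\}_{n\in\BZ}$ are then pairwise distinct.
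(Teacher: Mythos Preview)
The paper does not contain a proof of this theorem: it is quoted from Cutkosky--Srinivas \cite[Theorem~7]{ZariskiConjecture} and used as a black box, with the authors explicitly calling it ``a deep result'' that they are unable to generalize. There is thus no argument in this paper to compare your proposal against.

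On the proposal itself, neither route you sketch is complete. The Mordell--Lang shortcut has a gap you do not flag: the theorems of Faltings, Laurent, Vojta, and McQuillan concern semiabelian varieties, whereas the present statement is for an arbitrary connected commutative algebraic group $G$, which in characteristic~$0$ can carry a nontrivial unipotent piece in its Chevalley decomposition; you would first have to reduce to the semiabelian case before invoking those results. Your inductive alternative is candid about where it stalls: when the stabilizer $K$ of $H$ is finite there is no smaller quotient to descend to, and that residual case is essentially the whole difficulty. The reductions you record (density of $\langle mx\rangle$ for $m\ne 0$, the stabilizer trick, the descent along $G\to G/K^\circ$) are correct and are standard opening moves, but what remains after them is precisely the substantive content of the Cutkosky--Srinivas argument, which you have not supplied.
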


This is a deep result and, in our opinion,
the most crucial step of Cutkosky and Srinivas' proof
of Zariski's conjecture. Unfortunately, we are unable to generalize
this to prove Conjecture \ref{GSRING2CONJ000}, even under the assumption of Mordell-Lang. A further discussion of this conjecture is planned in a future paper.

In summary, this proves Theorem \ref{GSRING2THM000} when $D_i$ are big and nef, assuming 
Conjecture \ref{GSRING2CONJ000}. For $D_i$ big but not necessarily nef, we are going to use Zariski's decomposition to reduce it to the nef case just as in \cite{ZariskiConjecture}.

\subsection{Zariski decomposition via stable loci}

A more contemporary interpretation of \eqref{GSRING2E021} \cite{ELMN} is
\begin{equation*}\label{GSRING2E022}
\begin{aligned}
\supp(N) &= \mathbf{B}_-(D) = \bigcup_{A} \mathbf{B}(D+A)\\
&= \bigcup_{A}
\Big(\bigcap_{m} \Bs(m(D + A))\Big)
\end{aligned}
\end{equation*}
where $A$ runs over all ample $\BQ$-divisors, $m$ runs over all positive integers such that $m(D+A)$ is integral,
$\Bs(L)$ is the base locus of the linear series $|L|$,
$\mathbf{B}(L)$ is called the {\em stable locus} of $L$ and $\mathbf{B}_-(L)$ is called the {\em restricted base locus} of $L$. 

When $D$ is big in \eqref{GSRING2E021}, $P$ is big and
\begin{equation}\label{GSRING2E026}
\begin{aligned}
\supp(N) &= \mathbf{B}_-(D) \subset \mathbf{B}_+(D) =
\bigcap_{A} \mathbf{B}(D-A)\\
&= \mathbf{B}_+(P) = \bigcup_{\substack{C\subset X \text{ integral curve}\\ PC=0}} C
\end{aligned}
\end{equation}
where $A$ runs over all ample $\BQ$-divisors and
$\mathbf{B}_+(L)$ is called the {\em augmented base locus} of $L$ \cite[Example 1.11]{ELMN}.

Both restricted and augmented base loci can be defined for $\BR$-divisors \cite{ELMN}. When we define $\mathbf{B}_-(D)$ or
$\mathbf{B}_+(D)$ for $\BR$-divisors $D$, we choose the corresponding $\BR$-ample divisor $A$ such that $D+A$ or $D-A$ is a $\BQ$-divisor. The same statements \eqref{GSRING2E022}
and \eqref{GSRING2E026} hold for big $\BR$-divisors on a smooth projective surface \cite[Example 3.4]{ELMN}.


\begin{lem}\label{GSRING2LEMZARISKI}
Let $D$ be a big $\BR$-divisor on a smooth projective surface $X$ with Zariski decomposition $D = P + N$. There exists $\varepsilon > 0$ such that for every $\BR$-divisor $D' = P' + N'$ on $X$ satisfying
$|| D - D' || < \varepsilon$, $PN' = P'N = 0$ and $\supp(N)\subset \supp(N')$,
where $D' = P' + N'$ is the Zariski decomposition of $D'$ and $||F||$ is the Euclidean norm on $H^2(X,\BR)$.
\end{lem}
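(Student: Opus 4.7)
The plan is to extract both the support containment and the two intersection vanishings from \eqref{GSRING2E022} and \eqref{GSRING2E026} via the upper/lower semi-continuity of restricted and augmented base loci for big $\BR$-divisors on a smooth projective surface. Since $X$ is a surface, both $\mathbf{B}_-(D) = \supp(N)$ and $\mathbf{B}_+(D)$ are finite unions of integral curves, so the monotone families $\{\mathbf{B}(D+A)\}_A$ and $\{\mathbf{B}(D-A)\}_A$ stabilize: there exist ample $\BR$-divisors $A_0$ and $A_1$ with $\mathbf{B}(D+A_0) = \mathbf{B}_-(D)$ and $\mathbf{B}(D-A_1) = \mathbf{B}_+(D)$.

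First I would choose $\varepsilon>0$ small enough that for every $F$ with $\|F\|<\varepsilon$ the divisor $D+F$ is still big and both $A_0-F$ and $A_1+F$ remain ample (possible since the big cone and the ample cone are open in $H^2(X,\BR)$). Setting $D' = D+F$ and using the identities $D+A_0 = D'+(A_0-F)$ and $D-A_1 = D'-(A_1+F)$, one reads off the semi-continuity
\begin{equation*}
\mathbf{B}_-(D) \;=\; \mathbf{B}\bigl(D'+(A_0-F)\bigr) \;\subset\; \mathbf{B}_-(D'), \qquad \mathbf{B}_+(D') \;\subset\; \mathbf{B}\bigl(D'-(A_1+F)\bigr) \;=\; \mathbf{B}_+(D).
\end{equation*}
The first inclusion is exactly $\supp(N)\subset \supp(N')$ by \eqref{GSRING2E022}. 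Writing $N=\sum a_i G_i$ with the $G_i$ irreducible, each $G_i \subset \supp(N') = \mathbf{B}_-(D') \subset \mathbf{B}_+(D')$; by \eqref{GSRING2E026} applied to $D'$ (which identifies $\mathbf{B}_+(D')$ as the union of integral curves $C$ with $P'C=0$), this forces $P'G_i=0$ and hence $P'N=0$. Symmetrically, every irreducible component $G'$ of $\supp(N')=\mathbf{B}_-(D')$ lies in $\mathbf{B}_+(D')\subset \mathbf{B}_+(D)$, so \eqref{GSRING2E026} applied to $D$ yields $PG'=0$ and hence $PN'=0$.

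The only non-formal ingredient is the stabilization of the union in \eqref{GSRING2E022} and the intersection in \eqref{GSRING2E026} for big $\BR$-divisors on $X$, i.e., the existence of the specific ample $A_0$ and $A_1$ chosen above. This is where the main technical point lies: on a surface the relevant base loci are proper closed subsets in which only finitely many irreducible curves can ever appear across the monotone families $\{\mathbf{B}(D+A)\}$ and $\{\mathbf{B}(D-A)\}$, so standard finiteness forces stabilization for $A$ sufficiently close to $0$. This should be routine in the $\BR$-divisor setting given \cite[Example 3.4]{ELMN}; once granted, everything else in the proof is formal intersection-theoretic bookkeeping and carries over verbatim from the $\BQ$-divisor case to the $\BR$-divisor case.
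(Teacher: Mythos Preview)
Your proof is correct and follows essentially the same route as the paper. The paper invokes \cite[Corollary 1.6]{ELMN} directly for the inclusion $\mathbf{B}_+(D')\subset \mathbf{B}_+(D)$, whereas you re-derive it from the stabilization $\mathbf{B}(D-A_1)=\mathbf{B}_+(D)$; otherwise the arguments for $\supp(N)\subset\supp(N')$ via stabilization of $\mathbf{B}_-$ and the deduction of $PN'=P'N=0$ from \eqref{GSRING2E026} are identical.
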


\begin{proof}
By \cite[Corollary 1.6]{ELMN}, there exists $\varepsilon_1 > 0$ such that
\begin{equation*}\label{GSRING2E027}
\mathbf{B}_+(D') \subset \mathbf{B}_+(D) 
\end{equation*}
for all $D'$ satisfying $|| D - D' || < \varepsilon_1$. Therefore,
\begin{equation*}\label{GSRING2E029}
\supp(N') \subset \mathbf{B}_+(D') \subset \mathbf{B}_+(D)
= \mathbf{B}_+(P)
\end{equation*}
and hence $PN' = 0$.

Since $\mathbf{B}_-(D) = \supp(N)$ is a scheme (it is a countable union of irreducible subvarieties of $X$ in general), there exists an ample $\BR$-divisor $A$ such that $\mathbf{B}_-(D) = \mathbf{B}(D+A)$. Then there exists $\varepsilon_2 > 0$ such that
$A + F$ is ample for all $\BR$-divisors $F$ satisfying
$||F|| < \varepsilon_2$. It follows that
\begin{equation*}\label{GSRING2E028}
\begin{aligned}
\supp(N) &= \mathbf{B}_-(D) = \mathbf{B}(D+A)
\\
&= \mathbf{B}(D' + A + (D - D')) \subset \mathbf{B}_-(D') = \supp(N')\\
& \subset \mathbf{B}_+(D') = \mathbf{B}_+(P')
\end{aligned}
\end{equation*}
for all $D'$ satisfying $||D-D'||<\varepsilon_2$ and hence $P'N = 0$.

It suffices to take $\varepsilon = \min(\varepsilon_1, \varepsilon_2)$ to finish the proof of the lemma.
\end{proof}

Now let us prove the theorem under the hypothesis \eqref{GSRING2E025}.

Let $s$ be a positive integer such that $sP_i$ and $sN_i$ are integral for all $i=1,2,...,l$. We write $m_i = sq_i + r_i$ for
$q_i\in \BN$ and $0\le r_i < s$. Then
\begin{equation*}\label{GSRING2E023}
\begin{aligned}
M_{X,D,D_1,D_2,...,D_l}(t) &= \sum_{0\le r_i < s}
t_1^{r_1}t_2^{r_2}...t_l^{r_l}
M_{X,D+\sum r_iD_i, sD_1,sD_2,...,sD_l}(t^s)
\end{aligned}
\end{equation*}
where $t^s = (t_1^s,t_2^s,...,t_l^s)$. So this reduces it to the case that $P_i$ and $N_i$ are integral.

By Lemma \ref{GSRING2LEMZARISKI}, there exists $n\in \BZ^+$ such that
the Zariski decomposition
\begin{equation*}\label{GSRING2E030}
D + n\sum_{i\in l} D_i = P + N
\end{equation*}
satisfies that
\begin{equation*}\label{GSRING2E031}
\left(P + \sum P_i\right)\left(N + \sum N_i\right) = 0.
\end{equation*}
Therefore,
\begin{equation*}\label{GSRING2E032}
h^0(D + \sum m_i D_i) = h^0((D+ n\sum D_i) + \sum (m_i-n)P_i) 
\end{equation*}
for $m_1,m_2,...,m_l\ge n$. Since we have proved the theorem for $D_i$ nef,
\begin{equation*}\label{GSRING2E033}
\begin{aligned}
&\quad \sum_{m_1,m_2,...,m_l\ge n}
h^0(D + \sum m_i D_i) t_1^{m_1} t_2^{m_2} ... t_l^{m_l}
\\
&= \sum_{m_1,m_2,...,m_l\ge n}
h^0((D+ n\sum D_i) + \sum (m_i-n)P_i) t_1^{m_1} t_2^{m_2} ... t_l^{m_l}
\end{aligned}
\end{equation*}
is rational. This proves the theorem under the hypothesis \eqref{GSRING2E025}.

\subsection{Zariski decomposition chambers}

Finally, let us reduce the general case to 
\eqref{GSRING2E025}. Let $W\subset H^2(X,\BR)$ be the cone generated 
by $D_1,D_2,...,D_l$. Since all $D_i$ are big, every nonzero $\BR$-divisor $D\in W$ is big and hence $\mathbf{B}_-(D)$ is a union of curves contained in
\begin{equation}\label{GSRING2E034}
\mathbf{B}_-(D) \subset \mathbf{B}_+(D) \subset \bigcup_{i=1}^l \mathbf{B}_+(D_i) = G
\end{equation}
where $G$ is a reduced effective divisor on $X$. For every effective divisor $\Gamma\le G$, we let
$W_\Gamma$ be the set of $D\in W$ such that $\mathbf{B}_-(D) = \Gamma$. Then $W_\Gamma$ is a (not necessarily closed) cone. We claim that its closure $\overline{W}_\Gamma$ is rational polyhedral.

\begin{lem}\label{GSRING2LEMZDC}
Let $X$ be a smooth projective surface, $W\subset H^2(X, \BR)$ be a cone generated by finitely many big divisors on $X$, $\Gamma$ be a reduced effective divisor and $W_\Gamma$
be the set of $D\in W$ such that $\mathbf{B}_-(D) = \Gamma$. Then $\overline{W}_\Gamma$ is a rational polyhedral cone.
\end{lem}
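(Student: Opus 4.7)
My plan is to identify $\overline{W}_\Gamma$ explicitly as a rational polyhedral sub-cone of $W$ cut out by finitely many linear inequalities coming from the Zariski decomposition associated to $\Gamma$. Write $\Gamma = \Gamma_1 + \cdots + \Gamma_k$ for the irreducible decomposition; by \eqref{GSRING2E034}, $W_\Gamma$ is empty unless each $\Gamma_i$ is one of the components $G_1, \ldots, G_m$ of $G$, which I shall assume. For any $D \in W_\Gamma$ the divisor $\Gamma$ equals $\supp(N_D)$, so the intersection matrix $M = (\Gamma_i \cdot \Gamma_j)_{i,j}$ is negative definite. Using $M^{-1}$, define $\BQ$-linear functionals $a_i \colon H^2(X, \BR) \to \BR$ by $a_i(D) = \sum_j (M^{-1})_{ij} (D \cdot \Gamma_j)$, and the linear map $L(D) = D - \sum_i a_i(D)\Gamma_i$; then $L(D) \cdot \Gamma_j = 0$ for all $j$, and by uniqueness of the Zariski decomposition, on $W_\Gamma$ we have $N_D = \sum a_i(D) \Gamma_i$ with $a_i(D) > 0$ and $L(D) = P_D$ nef. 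Set
\[
V = \{ D \in W : a_i(D) \ge 0 \text{ for all } i,\ L(D) \cdot G_j \ge 0 \text{ for all } j \}.
\]
This is the intersection of the rational polyhedral cone $W$ with finitely many rational linear half-spaces, hence itself a rational polyhedral cone, and $\overline{W}_\Gamma \subseteq V$ is immediate by continuity.

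For the reverse inclusion, it suffices to show that the relative interior $V^\circ$ (where all inequalities are strict) is contained in $W_\Gamma$; then $V = \overline{V^\circ} \subseteq \overline{W}_\Gamma$. Fix $D \in V^\circ$, let $S = \{i : \Gamma_i \text{ is a component of }\Gamma\}$ (identified with a subset of $\{1, \ldots, m\}$ via $\Gamma_i = G_i$), and let $S' \subseteq \{1, \ldots, m\}$ index the components of the actual $\supp(N_D)$. I aim to prove $S = S'$: by uniqueness of Zariski decomposition, $D = L(D) + \sum a_i(D) \Gamma_i$ will then be the Zariski decomposition of $D$, so $\mathbf{B}_-(D) = \Gamma$. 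The proof of $S = S'$ proceeds in two steps, both driven by the same linear-algebra input: a negative-definite symmetric matrix whose off-diagonal entries are non-negative has, up to sign, Stieltjes $M$-matrix structure, so its inverse has all entries non-positive. First, applying this to the Schur complement of $M_{S \cap S'}$ inside $M_S$ and using $(D - N_D) \cdot \Gamma_i = P_D \cdot \Gamma_i \ge 0$ for $i \in S \setminus S'$, I deduce $a_i(D) \le 0$ on $S \setminus S'$; strictness in $V^\circ$ forces $S \subseteq S'$. Second, given $S \subseteq S'$, the same sign estimate applied to $M_S$ yields $a_i(D) \le b_i := \nu_{G_i}(N_D)$ for $i \in S$, so the auxiliary divisor $R = N_D - \sum_i a_i(D)\Gamma_i = \sum_{i \in S}(b_i - a_i)G_i + \sum_{i \in S' \setminus S} b_i G_i$ is effective. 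Then $L(D) = P_D + R$ is nef: $L(D) \cdot \Gamma_j = 0$ for $j \in S$ by construction, $L(D) \cdot G_j > 0$ for $j \notin S$ by $V^\circ$, and for any irreducible $C \not\subset G$, $L(D) \cdot C = P_D \cdot C + R \cdot C > 0$ because $\mathbf{B}_+(P_D) = \mathbf{B}_+(D) \subset G$ implies $P_D \cdot C > 0$ while $R$ is effective with support in $G$. Once $L(D)$ is nef, uniqueness of Zariski decomposition forces $S' \subseteq S$, so $S = S'$.

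The main obstacle is the Stieltjes/$M$-matrix step: turning the negative-definiteness of $M$ together with the non-negativity of intersections of distinct integral components of $G$ into the two sign controls $a_i(D) \le 0$ on $S \setminus S'$ and $R \ge 0$. Everything else in the argument is routine—the description of $V$ is by construction rational polyhedral, the inclusion $\overline{W}_\Gamma \subseteq V$ is by continuity, and the curve-by-curve nefness check for $L(D)$ uses only the linear inequalities defining $V^\circ$ together with the structural fact $\mathbf{B}_+(P_D) \subset G$ already established in \eqref{GSRING2E034}. Together these yield $\overline{W}_\Gamma = V$ and prove the lemma.
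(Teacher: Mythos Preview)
Your proof is correct and identifies the same rational polyhedral cone as the paper: your $V$ coincides with the paper's $W\cap\sigma_\Gamma^{-1}(S_\Gamma\times T_G)$, since $L(D)$ and $\sum a_i(D)\Gamma_i$ are exactly the two components of the paper's orthogonal splitting $\sigma_\Gamma$. The overall architecture---write down this explicit cone and prove it equals $\overline{W}_\Gamma$---is the same.

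The difference lies in how the hard inclusion $V\subseteq\overline{W}_\Gamma$ is established. The paper first characterises $\overline{W}_\Gamma$ intrinsically via Zariski decomposition (using Lemma~\ref{GSRING2LEMZARISKI}), then for $D\in V$ writes $N_D-F_1=A-B$ with $A,B\ge 0$ having no common components and uses the self-intersection inequalities $B^2\le 0$, $A^2\le 0$ together with nefness of $P_D$ and of $L(D)$ on $G$ to force $A=B=0$. You instead work on the relative interior and invoke the Stieltjes/$M$-matrix sign property of $M^{-1}$ to compare $a_i(D)$ with the coefficients of the actual $N_D$, obtaining $S=S'$. The two arguments are essentially dual linear-algebra manifestations of the same negative-definiteness; yours has the small advantage of bypassing Lemma~\ref{GSRING2LEMZARISKI}, while the paper's argument works directly on all of $V$ rather than on $V^\circ$. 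One minor point: your parenthetical ``where all inequalities are strict'' is slightly imprecise, since $L(D)\cdot G_j$ may vanish identically on $V$ for some $j\notin S$; but your Step~2 only needs $L(D)\cdot G_j\ge 0$ there, and the genuinely needed strict inequalities $a_i(D)>0$ do hold on the relative interior because $W_\Gamma\ne\emptyset$ supplies a point with all $a_i>0$.
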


\begin{proof}
There is nothing to prove if $W_\Gamma = \emptyset$. Suppose that $W_\Gamma\ne \emptyset$.
We claim that
\begin{equation}\label{GSRING2E211}
\overline{W}_\Gamma = \{
D\in W: D = P + N,\ P\Gamma = 0,\ \supp(N) \subset \Gamma
 \}
\end{equation}
where $D = P + N$ is the Zariski decomposition of $D$.

We choose some $D' = P' + N' \in W_\Gamma$.
For each $D = P+N\in W$ satisfying $P\Gamma = 0$ and $\supp(N)\subset \Gamma$, $D$ and $D'$ have compatible Zariski decompositions and hence
$s D + t D'$ has Zariski decomposition
\begin{equation*}\label{GSRING2E212}
sD + tD' = (sP + tP') + (sN + tN')\in W_\Gamma
\end{equation*}
for all $s, t > 0$. This shows that $D\in \overline{W}_\Gamma$.

On the other hand, for every $\BR$-divisor $D = P + N\in \overline{W}_\Gamma$, we can find $\varepsilon > 0$ as in Lemma \ref{GSRING2LEMZARISKI} such that $P N' = 0$ and $\supp(N)\subset \supp(N')$ for
all $D' = P' + N'$ satisfying $||D - D'|| < \varepsilon$. Clearly, we can find $D'\in W_\Gamma$ such
that $|| D- D' || < \varepsilon$ and it follows that $P\Gamma = 0$ and $\supp(N)\subset \Gamma$.
This proves \eqref{GSRING2E211}.

Let $\Lambda_\Gamma$ be the linear subspace of $H^2(X, \BR)$ spanned by the irreducible components of $\Gamma$ and let $\sigma_\Gamma: H^2(X, \BR)\to H^2(X, \BR) \times H^2(X,\BR)$ be the map given by
the projections of $H^2(X, \BR)$ to $\Lambda_\Gamma$ and $\Lambda_\Gamma^\perp$ under the cup product. That is,
\begin{equation*}\label{GSRING2E213}
\begin{aligned}
\sigma_\Gamma(D) &= (F_1, F_2)\text{ for } D = F_1 + F_2 \text{ with } F_1\in \Lambda_\Gamma
\text{ and}\\
&\hspace{144pt} F_2 E = 0 \text{ for all } E\in \Lambda_\Gamma.
\end{aligned}
\end{equation*}
Note that since we assume that $W_\Gamma \ne \emptyset$, the components of $\Gamma$ have negative definite self-intersection matrix. So the decomposition $D = F_1 + F_2$ is unique for each
$D\in H^2(X, \BR)$ and $\sigma_\Gamma$ is well defined.

Obviously, $\sigma_\Gamma$ is $\BQ$-linear. Namely, it is induced by the corresponding linear map
$H^2(X, \BQ)\to H^2(X, \BQ) \times H^2(X,\BQ)$.

We let $S_\Gamma$ be the cone in $H^2(X,\BR)$ generated by the irreducible components of $\Gamma$
and let $T_G$ be the cone in $H^2(X,\BR)$ given by
\begin{equation*}\label{GSRING2E214}
T_G = \{ D: D C \ge 0\text{ for all irreducible components } C\subset G
\}
\end{equation*}
where $G$ is given by \eqref{GSRING2E034} with $D_i$ the generators of $W$.

Clearly, both $S_\Gamma$ and $T_G$ are rational polyhedral cones. And since $\sigma_\Gamma$ is
$\BQ$-linear,
$\sigma_\Gamma^{-1}(S_\Gamma\times T_G)$ is also a rational polyhedral cone. We claim that
\begin{equation}\label{GSRING2E215}
\overline{W}_\Gamma = W\cap \sigma_\Gamma^{-1}(S_\Gamma\times T_G).
\end{equation}
Obviously, this implies that $\overline{W}_\Gamma$ is rational polyhedral since both $W$ and $\sigma_\Gamma^{-1}(S_\Gamma\times T_G)$ are. It remains to justify
\eqref{GSRING2E215}.

By \eqref{GSRING2E211}, $\sigma_\Gamma(D)\in S_\Gamma\times T_G$ for all $D\in \overline{W}_\Gamma$.

On the other hand, for every $D\in W\cap \sigma_\Gamma^{-1}(S_\Gamma\times T_G)$, we have
\begin{equation*}\label{GSRING2E216}
D = P + N = F_1 + F_2
\end{equation*}
where $D = P + N$ is the Zariski decomposition of $D$ with $\supp(N) \subset G$,
$F_1$ is $\BR$-effective with $\supp(F_1) \subset \Gamma$, $F_2 E = 0$ and
$F_2 C \ge 0$ for all irreducible components $E\subset \Gamma$ and $C\subset G$.
Let us write
\begin{equation*}\label{GSRING2E217}
N - F_1 = A - B
\end{equation*}
for $\BR$-effective divisors $A$ and $B$ such that $\supp(A)$ and $\supp(B)$ have no common components.
Then $\supp(A)\subset \supp(N)$, $\supp(B) \subset \Gamma$ and
\begin{equation*}\label{GSRING2E218}
P + A = F_2 + B.
\end{equation*}
The components of $\supp(N)$ and $\Gamma$ have negative definite self-intersection matrices, respectively.
Therefore, $A^2 \le 0$, $B^2 \le 0$ and the equalities hold if and only if $A =0$ and $B=0$, respectively.

Since $P$ is nef and $\supp(A)$ and $\supp(B)$ have no common components,
$PB\ge 0$, $AB\ge 0$ and hence
\begin{equation*}\label{GSRING2E219}
0 \le (P + A)B = (F_2 + B) B = B^2.
\end{equation*}
Therefore, $B = 0$ and
\begin{equation*}\label{GSRING2E220}
P + A = F_2.
\end{equation*}
Since $F_2 C \ge 0$ for all components $C\subset G$ and
$\supp(A)\subset \supp(N)\subset G$,
$F_2 A \ge 0$ and hence
\begin{equation*}\label{GSRING2E221}
0 \le F_2 A = (P + A) A = A^2.
\end{equation*}
Therefore, $A = 0$. In conclusion, $P = F_2$, $N = F_1$ and $D=P+N\in \overline{W}_\Gamma$ by \eqref{GSRING2E211}. This proves \eqref{GSRING2E215}.
\end{proof}

Finally, let us finish the proof of Theorem \ref{GSRING2THM000}.
We write
$\varphi(a_1,a_2,...,a_l) = \sum a_i D_i$ for $a_i\in \BR$
and let $V_\Gamma = \varphi^{-1} (W_\Gamma)$. Then
\begin{equation*}\label{GSRING2E035}
M_{X,D,D_1,D_2,...,D_l}(t) = h^0(D) + \sum_{\Gamma\le G}
\ \sum_{\mathbf{v}\in V_\Gamma\cap \BZ^l} h^0(D+\varphi(\mathbf{v})) t^{\mathbf v}
\end{equation*}
where we write $\mathbf{v} = (m_1,m_2,...,m_l)$
and $t^{\mathbf{v}} = t_1^{m_1}t_2^{m_2}...t_l^{m_l}$.
So it suffices to show that
\begin{equation}\label{GSRING2E036}
\sum_{\mathbf{v}\in V_\Gamma\cap \BZ^l} h^0(D+\varphi(\mathbf{v})) t^{\mathbf v}
\end{equation}
is rational for all $\Gamma \le G$. For $V_\Gamma$ with empty interior, we can prove that \eqref{GSRING2E036} is rational by induction on $l$. If $V_\Gamma$ has nonempty interior,
\begin{equation}\label{GSRING2E037}
\begin{aligned}
\sum_{\mathbf{v}\in V_\Gamma\cap \BZ^l} h^0(D+\varphi(\mathbf{v})) t^{\mathbf v} &= 
\sum_{\mathbf{v}\in \overline{V}_\Gamma\cap \BZ^l} h^0(D+\varphi(\mathbf{v})) t^{\mathbf v}
\\
&\quad - \sum_{\mathbf{v}\in (\overline{V}_\Gamma - V_\Gamma)\cap \BZ^l} h^0(D+\varphi(\mathbf{v})) t^{\mathbf v}
\end{aligned}
\end{equation}
Note that $\overline{V}_\Gamma - V_\Gamma$ is a finite union of cones with empty interior. So the second term on the right hand side of \eqref{GSRING2E037} is rational by induction hypothesis. It comes down to showing the rationality of
\begin{equation}\label{GSRING2E038}
\sum_{\mathbf{v}\in \overline{V}_\Gamma\cap \BZ^l} h^0(D+\varphi(\mathbf{v})) t^{\mathbf v}.
\end{equation}
By Lemma \ref{GSRING2LEMZARISKI}, we can show that
for every pair of $\BQ$-divisors $D_1, D_2\in \overline{W}_\Gamma$,
$P_1 N_2 = P_2 N_1 = 0$ under the Zariski decomposition
$D_i = P_i+N_i$.

As before, by dividing $\overline{V}_\Gamma$ into a union of
rational simplicial cones, we can reduce the rationality of \eqref{GSRING2E038} to that of
\begin{equation*}\label{GSRING2E039}
\sum_{\mathbf{v}\in S\cap \BZ^l} h^0(D+\varphi(\mathbf{v})) t^{\mathbf v}
\end{equation*}
for all rational simplicial cones $S\subset \overline{V}_\Gamma$.

Suppose that $S$ is generated $\mathbf{v}_1, 
\mathbf{v}_2, ..., \mathbf{v}_l\in \BZ^l$ over $\BR$. Then
\begin{equation*}\label{GSRING2E040}
\Big(\sum \widehat{P}_i\Big) \Big(\sum \widehat{N}_i\Big) = 0
\end{equation*}
for the Zariski decompositions $\varphi(\mathbf{v}_i) = \widehat{P}_i + \widehat{N}_i$.

We have
\begin{equation*}\label{GSRING2E041}
\begin{aligned}
&\quad \sum_{\mathbf{v}\in S\cap \BZ^l} h^0(D+\phi(\mathbf{v})) t^{\mathbf v}\\
&= \sum_{\mathbf{u} \in \Lambda} t^{\mathbf u} \sum_{m_i\in \BN}
h^0((D + \varphi(\mathbf{u})) + \sum m_i \varphi(\mathbf v_i))
t^{\sum m_i \mathbf v_i}
\end{aligned}
\end{equation*}
where
\begin{equation*}\label{GSRING2E043}
\Lambda = \Big\{\sum_{i=1}^l \lambda_i \mathbf v_i: 0\le \lambda_i < 1\Big\}\cap \BZ^n.
\end{equation*}
This reduces it to the case \eqref{GSRING2E025}.

\appendix

\section{Examples of $E_X$}\label{GSRING2APPENDIXA}

The Euler-Chow series is in general very hard to compute. There are however a few examples where it is rational. Here we write two known examples, for details see \cite{ElizondoEulerSeries}. In there, it is proved that the invariant subvarieties under the torus action are in bijection with the cones of the Fan associated to the toric varieties. Let $\CC_{\lambda}$ be the Chow variety of effective cyles with homology class equal to $\lambda \in H_{2p}(X,\mathbb{Z})$, and $\CC_{\lambda}^T$ the subset of fixed points under the torus action. It is well known that $\chi(\CC_{\lambda})=\chi(\CC_{\lambda}^{T}).$ 

\subsection{Projective space $\Pp^n$}

Let  \, $X = {\bf P}^{n}$ \, be the complex projective space of dimension $n$. 
Let $\{e_{1},\ldots,e_{n}\}$ be the standard basis for ${\bf R}^{n}$. Consider $A \, = \, \{e_{1},\ldots,e_{n+1}\}$ a set of generators of the
fan $\Delta$, where $e_{n+1} = - \sum_{i=1}^{n} e_{i}$. We have the following equality 
$$
  H^{\ast}(X, \, {\bf Z}) \, \cong  \, {\bf Z} 
   \, [t_{1}, \ldots, t_{n+1}] \, /I
$$
where $I$ is the ideal generated by
$$
  i) \; \; \;   t_{1} \cdots t_{n+1}
$$ 
and 
$$
ii) \; \; \; t_i \sim t_j 
$$
Therefore
$$
H^{\ast} \, (X, \, {\bf Z}) \, = \, {\bf  Z} \, [t] \, / t^{n+1}.
$$
Consequently, any two cones of dimension $n-p$ represent the same element in
cohomology, and  
$$
{\displaystyle \prod_{i=1}^{(_{n-p}^{n+1})} {\left( \frac{1}{1-t} \right)}
\, = \, 
\left( \frac{1}{1-t} \right)^{(_{n-p}^{n+1})} \, = \, 
\left( \frac{1}{1-t} \right)^{(_{p+1}^{n+1})} \, = \, E_{p}(X)}.
$$
\subsection{Hirzebruch surfaces}
A set of generators for the fan $\Delta$
that represents the Hirzebruch surface $X(\Delta)$ is
given by $\{e_{1}, \ldots , e_{4} \}$ with
$\{e_{1}, e_{2}\}$ the standard basis for ${\bf R}^{2}$, and
$e_{3} \, = \, -e_{1} + ae_{2}, \; \; a   1$
and $e_{4} \, = \, -e_{2}$.
With the same notation as in the last examples, we have
$$
H^{\ast} (X(\Delta)) \, = \, {\bf Z}[t_{1}, \ldots, t_{4}] \, / \, I
$$
where $I$ is generated by
$$
i) \; \; \; \{ t_{1}t_{3}, \, t_{2}t_{4} \}
$$
and 
$$
ii) \; \; \; \{ t_{1} -t_{3}, \, t_{2}+ at_{3} -t_{4} \}
$$
from $ii)$ we have the following conditions for the $t_{i}$'s in $H^{\ast}(X)$
\begin{equation}
\label{coh}
\; \; \; \; t_{1} \, \sim \, t_{3} \; \; \mbox{and }\; \; t_{2}
\, \sim \, (t_{4}-at_{3}).
\end{equation}
A basis for $H^{\ast} (X)$ is given by $\{ \{0\}, t_{3}, t_{4}, t_{4}t_{1}  \}$
\, (see \cite{Toric}). 
The Euler series for each dimension is:

\noindent  {\bf 1)} Dimension 0:
There are four orbits (four cones of dimension 2), and all
of them are equivalent in homology.  
$$
E_{0} \, = \, {\left(\frac{1}{1-t}\right)}^{4}
$$
{\bf 2)}  Dimension 1:
Again, there are four orbits (four cones of dimension 1), and the
relation  among them,  
in homology, is given by ~\ref{coh}. We obtain 
$$
E_{1} \, = \, {\left(\frac{1}{1-t_{3}}\right)}^{2} \,
\left(\frac{1}{1-t_{4}}\right) \, \left(\frac{1}{1-t_{3}^{-a} t_{4}}\right).
$$
{\bf 3)}  Dimension 2: The only orbit is the torus itself so
$$
E_{2} \, = \, \frac{1}{1-t}.
$$

\newcommand{\etalchar}[1]{$^{#1}$}

\end{document}